\newtheorem{theorem}{Theorem}[section]
\newtheorem{lemma}{Lemma}[section]
\newtheorem{prop}{Proposition}[section]
\newtheorem{remark}{Remark}
\newcommand{\vague}{\stackrel{v}{\longrightarrow}}
\newcommand{\prob}{\stackrel{P}{\longrightarrow}}
\newcommand{\eid}{\stackrel{d}{=}}
\newcommand{\one}{{\bf 1}}
\newcommand{\reals}{{\mathbb R}}
\newcommand{\bbr}{\reals}
\newcommand{\vep}{\varepsilon}
\newcommand{\bbrdcomp}{\overline{\bbr^d}}
\def\Var{{\rm Var}}
\numberwithin{equation}{section}
\begin{document}

\title[Truncated heavy tails]{Understanding heavy tails in a bounded
world or, is a truncated heavy tail heavy or not?}
\author[A. Chakrabarty]{Arijit Chakrabarty}
\address{School of Operations Research and Information Engineering,
Cornell University,
Ithaca, NY 14853, U.S.A.}
\email{ac427@cornell.edu}
\author[G. Samorodnitsky]{Gennady Samorodnitsky}
\address{School of Operations Research and Information Engineering,
Cornell University,
Ithaca, NY 14853, U.S.A.}
\email{gennady@orie.cornell.edu}

\begin{abstract}
We address the important question of the extent to which random
variables and vectors with truncated power tails retain the
characteristic features of random variables and vectors with power
tails. We define two truncation regimes,  soft truncation regime
and hard truncation regime, and show that, in the soft truncation
regime, truncated power tails behave, in important respects, as if
no truncation took place. On the other hand, in the hard truncation
regime much of ``heavy tailedness'' is lost. We show how to
estimate consistently the tail exponent when the tails are
truncated, and suggest statistical tests to decide on whether the
truncation is soft or hard. Finally, we apply our methods to two
recent data sets arising from computer networks.
\end{abstract}

\subjclass{Primary 62G32, 62G10 Secondary 60E07}
\keywords{ heavy tails, truncation, regular variation, Central Limit
  theorem, Hill estimator, consistency\vspace{.5ex}}
\thanks{Research partly supported by the ARO grant W911NF-07-1-0078 at
Cornell University. Gennady Samorodnitsky's research  was also
partly supported by a Villum Kann Rasmussen Visiting Professor
Grant at the University of Copenhagen and by Otto Moensted foundation
grant at Danish Technological University.}

\maketitle

\section{Introduction} \label{sec:intro}

Probability laws with power tails are ubiquitous in applications.
A good fit between empirical distribution of various quantities
of interest and distributions with power tails has been reported in
such diverse areas as human travel
(\cite{brockmann:hufnagel:geisel:2006}), earthquake
analysis (\cite{corral:2006}), animal science
(\cite{bartumeus:daluz:vishwanathan:catalan:2005})
and even in language (\cite{serrano:flammini:menczer:2009}).
It is also true that in many situations there
is a ``physical'' limit that prevents a quantity of interest from
taking an arbitrarily large value. The File Allocation Table (FAT)
used on most computer systems allows the largest file size to be 4GB
(minus one byte) (\cite{microsoft:154997});
 the greatest loss an insurance company is exposed
to by a single covered event is limited by its reinsurance
contract (see e.g. \cite{mikosch:2009}). Even the number of the
atoms in the universe is widely considered to be finite. It is
common in practice to combine these two facts together and use a
model that features power tails only in a truncated form; such
models are often referred to as {\it truncated
  L\'evy flights}, see e.g. \cite{scholtz:contreras:1998},
\cite{maruyama:murakami:2003} or \cite{zaninetti:ferraro:2008}. At
the first glance this leads to a situation where the power tails,
in a sense, completely disappear. The truncation may change
dramatically the behavior of the cumulative sums of observations
and it always changes dramatically the behavior of the cumulative
maxima of the observations. Yet it is precisely such patterns of
behavior for which a model with power tails is chosen in the first
place. This leads one to ask the natural question: {\bf to what
extent, if any, do phenomena well described by models with
truncated power tails retain the characteristic features of power
tails}?

Answering this question is not straightforward. We start by
pointing out that the level of truncation is linked to the
amount of observations one has at hand. This can be thought of in
different ways. First of all, finiteness of the sample is sometimes
taken as the source of the truncation, see
e.g. \cite{burroughs:tebbens:2001} or
\cite{barthelemy:bertolotti:wiersma:2008}.
 Secondly, both the
physical nature of the truncation bound and the available data can be
linked to a technological level. This is particularly transparent when
one models a phenomenon related to computer or communications
systems; see e.g. \cite{jelenkovic:1999} or
\cite{gomez:selman:crato:kautz:2000}. We
describe this situation as a sequence of models, each one with
truncated power tails or, in other words, as a triangular array
system, which we now proceed to define formally.

Let $F$ be a probability law on $\bbr^d$, $d\geq 1$,
with the following property. There exists a sequence $(b_n)$ with
$b_n\uparrow\infty$ and a non-null Radon measure $\mu$ on
$\bbrdcomp\setminus\{0\}$ with $\mu\bigl\{ \bbrdcomp\setminus
\bbr^d\}=0$, such that
\begin{equation} \label{e:power.tails}
nF\bigl( b_n^{-1}\cdot\bigr)\vague \mu(\cdot)
\end{equation}
vaguely in $\bbrdcomp\setminus\{0\}$. Here $\bbrdcomp$ is the
compactification of $\bbr^d$ obtained by adding to the latter a
ball of infinite radius centered at the origin. The measure $\mu$
has necessarily a scaling property: there exists $\alpha>0$ such
that for any Borel set $B\in \bbr^d$ and $c>0$, $\mu(cB) =
c^{-\alpha}\mu(B)$. We say that the probability law $F$ has
regularly varying tails with the tail exponent $\alpha$ (see
\cite{resnick:1987},
\cite{hult:lindskog:mikosch:samorodnitsky:2005}), and  we view $F$
as the law with non-truncated power tails. When studying the
extent to which the central limit theorem behavior is affected by
truncation (which is the main point of interest to us in the
present paper) we will assume that $0<\alpha<2$. This restriction
on the tail exponent $\alpha$ is precisely the one that guarantees
that $F$ is in the domain of attraction of an $\alpha$-stable law;
see e.g. \cite{rvaceva:1962}. Such a restriction on the values of
the tail exponent will not be necessary in other parts of the
paper.

For $n=1,2,\ldots$ (regarded both as the number of observations in
the $n$th row of the triangular array and the number of the model)
let $M_n>0$ denote the truncation level. The $n$th row of the
triangular array will consist of observations $X_{nj},\,
j=1,\ldots, n$, which we view as generated according to the
following mechanism:
\begin{equation}\label{e:the.model}
X_{nj}:=H_j\one\bigl(\|H_j\|\le  M_n\bigr)
+\frac{H_j}{\|H_j\|}(M_n+R_j)\one\bigl(\|H_j\|>M_n\bigr)\,,
\end{equation}
$j=1,\ldots, n, \, n=1,2,\ldots$.
Here $H_1,H_2,\ldots$ are i.i.d. random vectors in $\bbr^d$ with the
common law $F$ that has regularly
varying tails with a tail exponent $\alpha\in (0,2)$,
and $R_1,R_2,\ldots$ are an independent of
$H_1,H_2,\ldots$ sequence of i.i.d. nonnegative random variables. For
each $n=1,2,\ldots$ we view the observations $X_{nj},\, j=1,\ldots, n$
as having power tails that are truncated at level $M_n$.

We need to comment, at this point, on the role of the random variables
$R_1,R_2,\ldots$. One should view them as possessing light tails, even
exponentially decaying tails. In many cases taking these random
variables to be equal to zero with probability 1 is appropriate; in
other applications exponentially fast tapering off of the tails beyond
the truncation point has
been observed (see e.g. \cite{hong:rhee:kim:lee:chong:2008}). The
reader will notice that the results of this paper hold whenever the
tails of the random variables
$R_1,R_2,\ldots$ are only light enough, not necessarily exponentially
light. We have chosen to formulate our results in this way in order
to increase their generality, even though we are thinking of their
role in the model \eqref{e:the.model} as representing the
exponentially fast decaying tails.

Our approach to addressing the question ``to what extent do
models with truncated power tails retain the characteristic
features of power tails?'' lies in studying the effect of the
rate of growth of the truncation level $M_n$ on the asymptotic
properties of the triangular array defined in
\eqref{e:the.model}. Specifically, we introduce the following
definition. We will say that the tails in the model
\eqref{e:the.model} are
\begin{equation} \label{e:regimes}
\begin{array}{ll}
\text{truncated softly} & \text{if} \ \lim_{n\to\infty}nP\bigl( \|
H_1\|>M_n\bigr) = 0\,,\\
\text{truncated hard} & \text{if} \ \lim_{n\to\infty}nP\bigl( \|
H_1\|>M_n\bigr) = \infty\,.
\end{array}
\end{equation}
Clearly, an intermediate regime exists as well. Understanding the
various aspects of an intermediately truncated model is an interesting
theoretical question, that we largely leave aside in this paper, in
order to keep its size manageable (see, however, Remark
\ref{rk:interm} of Section \ref{sec:CLT}). For practical purposes the
soft and hard truncation regimes provide the main dichotomy.

We will use fairly
classical techniques in Section \ref{sec:CLT} below to show that,
as far as the behavior of the partial sums of the truncated heavy
tailed model \eqref{e:the.model} is concerned, observations with
softly truncated tails behave like heavy tailed random variables,
while observations with hard truncated tails behave like light
tailed random variables. It is, however, clear that, in practice,
the truncation level $M_n$ is not observed. Therefore, we set
before ourselves two tasks in this paper. The first one, is to
estimate the tail exponent $\alpha$ based on a sample of
observations with truncated power tails without knowing the
truncation level or, even, if the truncation is soft or hard. We
show how this can be accomplished in Section \ref{sec:Hill}. The
second task is to find out whether the tails in the sample are
truncated softly or hard. In Section \ref{sec:regime.test}, where
we suggest statistical procedures for testing the hypothesis of
the soft (correspondingly, hard) truncation regime against the
appropriate alternative. In Section \ref{sec:data.analysis} we
apply the statistical techniques of Section \ref{sec:regime.test}
to two recent data sets related to TCP connections in a large
computer network. The goal for that section is only to illustrate the statistical tests discussed in Section \ref{sec:regime.test}.

We finish this section by pointing out that some of the issues
related to models with power tails that have been ``tampered with'', 
have been addressed in the literature. The goals, points of view or 
the ways in which the tails are modified were different from what is
done in the present work. The paper
\cite{asmussen:pihlsgard:2005} discusses an application of
distributions with truncated power tails in queuing, and addresses
the question whether light tailed approximations or heavy
approximations work better in this situation. On the other hand, a
maximum likelihood estimation procedure of the tail exponent
$\alpha$ in a parametric model of truncated power tails
(specifically, the truncated Pareto distribution) is given in
\cite{aban:meerschaert:panorska:2006}. Estimation of the
tail exponent in randomly censored power models (where the tails
are not so much truncated, as contaminated) is discussed in
\cite{beirlant:guillou:dierckx:fils-villetard:2007} and
\cite{einmahl:fils-villetard:guillou:2008}. Finally, the Hill
estimator for censored data is discussed in
\cite{beirlant:guillou:2001}. The techniques for dealing with censored
tails are consistent with the earlier work of
\cite{csorgo:horvath:mason:1986}.

\section{A Central Limit Theorem for random vectors with truncated
  power tails}
\label{sec:CLT}

Consider the triangular array defined in \eqref{e:the.model}, where,
as we recall, the random vectors $H_1,H_2,\ldots$ have a distribution
with regularly
varying tails with a tail exponent $\alpha\in (0,2)$. This means that
these random vectors (or their law $F$) are in the domain of
attraction of some $\alpha$-stable law $\rho$ on $\bbr^d$ (see
\cite{rvaceva:1962}). That is, the partial sums $ S_n^{(H)} =
H_1+\ldots + H_n$, $n=1,2,\ldots$, converge in law, after appropriate
centering and scaling, to $\rho$. Defining the sums of the {\it truncated}
observations,
$$
S_n:=\sum_{j=1}^nX_{nj}, \ n=1,2,\ldots\,,
$$
we would like to know whether $\bigl( S_n,\, n=1,2,\ldots\bigr)$ still
converge in law, after suitable centering and scaling, to $\rho$. If
the answer is no, then we would like to know what do these sums of
random vectors with truncated power tails converge to. These questions
can be handled by the classical probabilistic tools, and the answer
turns out to depend exclusively on the truncation regime as defined in
\eqref{e:regimes}.

\subsection{Soft truncation regime: truncated heavy tails are still heavy}

We start with the situation where the truncation level $M_n$ grows
sufficiently fast with the sample size, so that the truncated power
tails model \eqref{e:the.model} is in the soft truncation
regime. Theorem \ref{t1} below shows that, in this case, the partial
sums of the random vectors with truncated heavy tails converge, when
properly centered and scaled, to the same $\alpha$-stable limit as
without truncation.

Let $(c_n)$ and $(b_n)$ denote, respectively, some centering and
scaling sequences for the non-truncated random vectors $(H_j)$, that is,
\begin{equation} \label{e:no.truncation}
b_n^{-1}S_n^{(H)} -c_n =
b_n^{-1}\sum_{j=1}^nH_j-c_n\Longrightarrow\rho
\end{equation}
as $n\to\infty$.

\begin{theorem}\label{t1} In the soft truncation regime we have
\begin{equation}\label{e:soft.same}
b_n^{-1}S_n-c_n\Longrightarrow\rho\,.
\end{equation}
\end{theorem}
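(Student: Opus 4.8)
The plan is to exploit the observation that, in the soft truncation regime, truncation affects the sample only very rarely: with probability tending to one, none of the $n$ observations in the $n$th row is actually truncated, so that $S_n$ coincides with $S_n^{(H)}$. The conclusion \eqref{e:soft.same} will then follow from the assumed convergence \eqref{e:no.truncation} by a converging-together (Slutsky) argument.

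First I would introduce the event
\begin{equation*}
A_n:=\bigl\{\|H_j\|\le M_n\text{ for all }1\le j\le n\bigr\}\,.
\end{equation*}
On $A_n$, the definition \eqref{e:the.model} of the model gives $X_{nj}=H_j$ for every $j=1,\ldots,n$, since the truncation term is triggered only when $\|H_j\|>M_n$; consequently $S_n=S_n^{(H)}$ on $A_n$. A union bound, using that the $H_j$ are i.i.d., yields
\begin{equation*}
P\bigl(A_n^c\bigr)=P\Bigl(\bigcup_{j=1}^n\{\|H_j\|>M_n\}\Bigr)\le nP\bigl(\|H_1\|>M_n\bigr)\,,
\end{equation*}
and the right-hand side tends to zero precisely because we are in the soft truncation regime, as defined in \eqref{e:regimes}. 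Hence $P(A_n)\to 1$.

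Next I would control the difference between the truncated and untruncated normalized sums. Since $S_n=S_n^{(H)}$ on $A_n$, the random vector $b_n^{-1}\bigl(S_n-S_n^{(H)}\bigr)$ vanishes on $A_n$, so that
\begin{equation*}
P\Bigl(b_n^{-1}\bigl(S_n-S_n^{(H)}\bigr)\ne 0\Bigr)\le P\bigl(A_n^c\bigr)\longrightarrow 0\,,
\end{equation*}
which is to say $b_n^{-1}\bigl(S_n-S_n^{(H)}\bigr)\prob 0$. Writing
\begin{equation*}
b_n^{-1}S_n-c_n=\bigl(b_n^{-1}S_n^{(H)}-c_n\bigr)+b_n^{-1}\bigl(S_n-S_n^{(H)}\bigr)
\end{equation*}
and combining \eqref{e:no.truncation} with the preceding display, Slutsky's theorem gives \eqref{e:soft.same}.

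I do not expect a genuine obstacle in this argument: the entire content lies in recognizing that soft truncation forces the expected number of truncated observations per row, $nP(\|H_1\|>M_n)$, to vanish, which makes the event ``some observation is truncated'' asymptotically negligible. The only step that requires any care is the union bound, and it is sharp enough here because the $H_j$ are identically distributed, so that no cancellation or refined large-deviation estimate is needed.
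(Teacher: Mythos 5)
Your proposal is correct and follows essentially the same route as the paper: both arguments reduce \eqref{e:soft.same} to showing $b_n^{-1}\bigl(S_n-S_n^{(H)}\bigr)\prob 0$, observe that the difference is nonzero only when some $\|H_j\|>M_n$, and bound that event's probability by $nP(\|H_1\|>M_n)\to 0$ using the soft truncation definition \eqref{e:regimes}. Your version merely makes explicit the event $A_n$ and the appeal to Slutsky's theorem, both of which are implicit in the paper's proof.
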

\begin{proof} By \eqref{e:no.truncation} it is enough to show that
$$
b_n^{-1}\left\|S_n-\sum_{j=1}^nH_j\right\|\stackrel
p\longrightarrow0\,.
$$
However, for any $\vep>0$,
$$
P\left( b_n^{-1}\left\|S_n-\sum_{j=1}^nH_j\right\|>\vep\right)
\leq P\Bigl( \|H_j\|>M_n \ \ \text{for some $j=1,\ldots, n$}\Bigr)
$$
$$
\leq nP\bigl( \|H_1\|>M_n\bigr)\to 0\,,
$$
and the claim follows.
\end{proof}

\subsection{Hard Truncation regime: truncated heavy tails are no
  longer heavy}

Now we consider the situation where the truncation level $M_n$
grows relatively slowly with the sample size, and that the
truncated power tails model \eqref{e:the.model} is in the hard
truncation regime. As we will see, in this case the partial sums
of the random vectors with truncated heavy tails are no longer
asymptotically $\alpha$-stable but, rather, converge in law, after
suitable centering and scaling, to a Gaussian limit. Therefore, at
least from the point of view of the behavior of partial sums, a
model with power tails that have been truncated hard does not
behave anymore as a heavy tailed model.

We start with some preliminaries. Recall that, since the limiting
law $\rho$ in \eqref{e:no.truncation} is $\alpha$-stable, the
L\'evy-Khinchine formula for its characteristic function has the form
\begin{equation} \label{e:levy-khnichine}
\hat\rho(\theta)=\exp\biggl[i\langle
  \theta,\gamma\rangle
\end{equation}
$$
+\int_{S}\left( \int_0^\infty
\left\{e^{ix\langle
      \theta,s\rangle}-1-ix\langle \theta,s\rangle \one(x\le
    1)\right\}x^{-(1+\alpha)}\, dx\right)\Gamma(ds)\biggr]
$$
for $\theta\in\bbr^d$,
where $\gamma\in\bbr^d$, and $\Gamma$ is a finite measure on
the unit sphere in $\bbr^d$,
$S:=\{x\in {\mathbb R}^d:\|x\|=1\}$, see Theorem 6.15 in
\cite{araujo:gine:1980}. The measure $\Gamma$
is often referred to as spectral measure of the law $\rho$; see
Theorem 2.3.1 in \cite{samorodnitsky:taqqu:1994}.

\begin{theorem}\label{t2} Assume that $ER_1^2<\infty$, and let
$$
B_n:=\bigl[ nM_n^2P(\|H_1\|>M_n)\bigr]^{1/2},\, n=1,2,\ldots\,.
$$
Then in the hard truncation regime we have
\begin{equation}\label{e:hard.gauss}
B_n^{-1}(S_n-ES_n)\Longrightarrow\eta\,,
\end{equation}
where $\eta$ is a centered Gaussian law on ${\mathbb R}^d$ whose
covariance matrix has the entries
\begin{equation}\label{clt.e3}
\frac{2}{2-\alpha}\int_S s_is_j\,\tilde\Gamma(ds),\, i,j =1,\ldots, d\,,
\end{equation}
where
$\tilde\Gamma(\cdot):=\Gamma(\cdot)/\Gamma(S)$ is the normalized
spectral measure of $\rho$.
\end{theorem}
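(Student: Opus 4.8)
The plan is to prove a central limit theorem for the triangular array of truncated random vectors via the classical theory of row-wise independent arrays. Writing $S_n = \sum_{j=1}^n X_{nj}$ with the $X_{nj}$ independent across $j$ for fixed $n$, I would verify the standard conditions for convergence to a Gaussian law. The natural tool is the multivariate Lindeberg--Feller theorem, or equivalently the general limit theorem for triangular arrays (as in Araujo--Gin\'e), applied to the centered and scaled summands $Y_{nj} := B_n^{-1}(X_{nj} - EX_{nj})$. Since the limit is purely Gaussian with no jump component, I would aim to show three things: (i) the \emph{uniform asymptotic negligibility} of the individual summands, ensuring no mass escapes to a L\'evy jump measure; (ii) convergence of the truncated covariances $\sum_{j=1}^n \Cov\bigl((Y_{nj})_i\one(\|Y_{nj}\|\le\tau),(Y_{nj})_k\one(\|Y_{nj}\|\le\tau)\bigr)$ to the target covariance matrix in \eqref{clt.e3}; and (iii) that the contribution of large summands vanishes, i.e.\ $n P(\|Y_{nj}\|>\tau)\to 0$ for each fixed $\tau>0$, which is the Lindeberg-type condition that kills the jump part.

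The heart of the computation is step (ii), the covariance asymptotics. Here I would exploit the structure of \eqref{e:the.model}: each $X_{nj}$ equals $H_j$ when $\|H_j\|\le M_n$ and equals a radially rescaled vector of norm $M_n+R_j$ when $\|H_j\|>M_n$, so $\|X_{nj}\|$ never exceeds $M_n+R_j$. The dominant contribution to the second moment comes from the truncated region, and one expects $E\bigl[(X_{nj})_i(X_{nj})_k\bigr]$ to be governed by integrals of the form $\int \|x\|^2 (\text{angular part}) \, F(dx)$ cut off at level $M_n$. Using the regular variation \eqref{e:power.tails} together with a Karamata-type argument, an integral like $E\bigl[\|H_1\|^2\one(\|H_1\|\le M_n)\bigr]$ behaves asymptotically like $\tfrac{2}{2-\alpha}M_n^2 P(\|H_1\|>M_n)$, which is exactly where the factor $\tfrac{2}{2-\alpha}$ and the normalization $B_n^2 = nM_n^2 P(\|H_1\|>M_n)$ originate. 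The angular dependence converges to $\int_S s_i s_j\,\tilde\Gamma(ds)$ because, on the sphere, the radial limit measure $\mu$ disintegrates into the spectral measure $\Gamma$ (equivalently $\tilde\Gamma$ after normalization), matching the structure in \eqref{e:levy-khnichine}. The hypothesis $ER_1^2<\infty$ guarantees that the overshoot $R_j$ beyond the truncation level contributes only a negligible lower-order term relative to $M_n^2$, since $M_n\to\infty$.

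For step (iii) I would check that, after scaling by $B_n$, no single summand is large: because $\|X_{nj}\|\le M_n+R_j$ and $B_n$ is of order $M_n\sqrt{nP(\|H_1\|>M_n)}$, which in the hard regime ($nP(\|H_1\|>M_n)\to\infty$) grows much faster than $M_n$, we get $B_n^{-1}\|X_{nj}\|\to 0$ in a strong enough sense to force $nP(\|Y_{nj}\|>\tau)\to0$. This is precisely the place where the \emph{hard} truncation assumption is used in an essential way: it is what makes the scaling $B_n$ large enough to smooth the truncated jumps into a Gaussian, in contrast with the soft regime of Theorem \ref{t1}. Once (i)--(iii) are established, the triangular-array CLT delivers \eqref{e:hard.gauss}, and the explicit covariance from step (ii) identifies the limit $\eta$.

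I expect the main obstacle to be the careful asymptotic evaluation in step (ii): converting the vague convergence \eqref{e:power.tails} of the \emph{tail} measure into precise asymptotics for the \emph{truncated second moments} $E\bigl[(X_{nj})_i(X_{nj})_k\one(\|X_{nj}\|\le M_n)\bigr]$, uniformly in the angular coordinates, and showing that the overshoot terms involving $R_j$ and the centering $EX_{nj}$ are genuinely negligible. This requires a Karamata-type integration-by-parts against a regularly varying tail with index $\alpha\in(0,2)$, handling the boundary behavior at the truncation level $M_n$ and verifying that cross terms between the truncated and overshoot parts do not survive in the limit. The angular disintegration linking $\mu$ to $\tilde\Gamma$ must also be justified so that the limiting covariance takes exactly the form \eqref{clt.e3}.
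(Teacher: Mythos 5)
Your overall architecture is the same as the paper's (a triangular-array Lindeberg CLT, Karamata asymptotics for truncated moments, and the hard-truncation relation $B_n\gg M_n$ to kill the large-summand condition; the paper does this coordinate-wise via Cram\'er--Wold rather than with a multivariate Lindeberg--Feller theorem, which is an immaterial difference). However, your key step (ii) contains a genuine error. By Karamata's theorem,
\begin{equation*}
E\bigl[\|H_1\|^2\one(\|H_1\|\le M_n)\bigr]=\int_0^{M_n}2yP(\|H_1\|>y)\,dy-M_n^2P(\|H_1\|>M_n)\sim\frac{\alpha}{2-\alpha}\,M_n^2P(\|H_1\|>M_n)\,,
\end{equation*}
not $\frac{2}{2-\alpha}M_n^2P(\|H_1\|>M_n)$ as you assert: the factor $\frac{2}{2-\alpha}$ belongs to the integral $\int_0^{M_n}2yP(\|H_1\|>y)\,dy$ alone, and subtracting the boundary term leaves $\frac{2}{2-\alpha}-1=\frac{\alpha}{2-\alpha}$. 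Correspondingly, your claim that the mass relocated to the sphere of radius $M_n+R_j$ contributes only negligible terms is false. That part of the model contributes
\begin{equation*}
E\left(\frac{\langle\theta,H_1\rangle^2}{\|H_1\|^2}(M_n+R_1)^2\one(\|H_1\|>M_n)\right)\sim M_n^2P(\|H_1\|>M_n)\int_S\langle\theta,s\rangle^2\,\tilde\Gamma(ds)\,,
\end{equation*}
which is exactly of the order $n^{-1}B_n^2$, i.e.\ the same order as the interior part. The hypothesis $ER_1^2<\infty$ only makes the \emph{excess} $(M_n+R_1)^2-M_n^2$ negligible, not this whole term. The correct constant $\frac{2}{2-\alpha}$ in \eqref{clt.e3} arises as the sum $\frac{\alpha}{2-\alpha}+1$: interior truncated second moment plus the atom of mass clumped at radius $\approx M_n$. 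This is precisely what the paper's Lemma \ref{clt.l5} (which yields $\frac{\alpha}{2-\alpha}$ for the interior integral against continuous angular test functions) together with the weak convergence \eqref{e:weak.conv} delivers. Your two mistakes happen to cancel and produce the right final constant, but the computation as written is wrong, and this is the heart of the proof.

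A secondary incompleteness: you flag but do not carry out the verification that the centering is negligible, i.e.\ that $n^{1/2}B_n^{-1}\bigl|E\langle\theta,X_{n1}\rangle\bigr|\to0$, so that first and second moments can be interchanged with variances. This is not automatic; the paper needs separate arguments for $\alpha\le1$ (a H\"older/Karamata bound) and $1<\alpha<2$ (where $E[\|H_1\|\one(\|H_1\|\le M_n)]$ converges and one uses $B_n\gg n^{1/2}$). Your step (iii) for the Lindeberg/large-summand condition is fine in spirit and matches the paper's use of $B_n\gg M_n$ in the hard regime.
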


We start with a lemma.

\begin{lemma}\label{clt.l5} For every
  continuous function $f:S\longrightarrow\mathbb R$,
$$
\lim_{n\rightarrow\infty}nB_n^{-2}\int_S\int_0^{M_n}f(s)r^2P\left(\|H_1\|\in
  dr,\frac {H_1}{\|H_1\|}\in
  ds\right)=\frac\alpha{2-\alpha}\int_Sf(s)\, \tilde\Gamma(ds).
$$
\end{lemma}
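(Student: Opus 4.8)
The plan is to establish Lemma \ref{clt.l5} by reducing the joint integral over the sphere and the radial variable to a purely radial computation governed by regular variation, combined with the limiting spectral measure. The key observation is that the integrand factors as $f(s)r^2$, so the integral can be rewritten in polar form, and the $r^2$ weight together with the truncation at $M_n$ is precisely what regular variation controls on the radial part.

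First I would introduce, for each continuous $f$ on $S$, the Radon measure on $\bbrdcomp\setminus\{0\}$ obtained from \eqref{e:power.tails}, namely $nF(b_n^{-1}\cdot)\vague\mu$, and rewrite the left-hand side in terms of the law of $(\|H_1\|, H_1/\|H_1\|)$. The factor $nB_n^{-2} = 1/[M_n^2 P(\|H_1\|>M_n)]$ suggests rescaling the radial variable by $M_n$: substituting $r = M_n u$, the inner integral becomes $\int_0^1 f(s) u^2 \, P(\|H_1\|\in M_n\,du, H_1/\|H_1\|\in ds)/P(\|H_1\|>M_n)$. The plan is to recognize the normalized radial measure $P(\|H_1\|\in M_n\,du \mid \cdot)/P(\|H_1\|>M_n)$ as converging, by regular variation of the tail of $\|H_1\|$ with index $\alpha$, to the measure $\alpha u^{-(\alpha+1)}\,du$ on $(0,\infty)$, while the angular part converges to $\tilde\Gamma(ds)$. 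Here I would invoke that \eqref{e:power.tails} implies, in polar coordinates, that the radial tail $P(\|H_1\|>x)$ is regularly varying of index $-\alpha$ and that the conditional angular distribution given a large radius converges weakly to $\tilde\Gamma$; this is the standard polar decomposition of multivariate regular variation (as in \cite{resnick:1987} or \cite{hult:lindskog:mikosch:samorodnitsky:2005}).

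Granting this decomposition, the computation reduces to evaluating $\int_S f(s)\,\tilde\Gamma(ds)$ multiplied by the radial integral $\int_0^1 u^2 \cdot \alpha u^{-(\alpha+1)}\,du = \alpha\int_0^1 u^{1-\alpha}\,du = \frac{\alpha}{2-\alpha}$, which is finite precisely because $\alpha<2$. This yields the asserted limit $\frac{\alpha}{2-\alpha}\int_S f(s)\,\tilde\Gamma(ds)$. The convergence $\int_0^1 u^{1-\alpha}\,du$ is where the hypothesis $\alpha<2$ enters to keep the $r^2$-moment integrable up to the truncation level.

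The main obstacle will be making the interchange of the angular and radial limits rigorous and handling the boundary behavior near $u=0$ and $u=1$. Near $u=1$ there is no difficulty because $u^2$ and the limiting density are bounded there, but near $u=0$ the density $\alpha u^{-(\alpha+1)}$ blows up, so I must confirm that the $u^2$ factor tames it and that no mass is lost in the limit; a uniform integrability or Potter-bounds argument on the regularly varying tail should control the contribution of small radii uniformly in $n$. The cleanest route is to establish the radial convergence as vague convergence of measures on $(0,1]$ tested against the continuous bounded function $u\mapsto u^2$ (extended appropriately), and to combine it with the angular convergence via a product-measure argument justified by the weak convergence of the joint rescaled law guaranteed by \eqref{e:power.tails}. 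Once the joint vague convergence is in place, integrating the continuous test function $f(s)u^2$ against the limit measure $\frac{\alpha}{2-\alpha}\tilde\Gamma(ds)\otimes(\text{normalized radial part})$ gives the result directly.
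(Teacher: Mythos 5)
Your proposal is correct, but it organizes the proof differently from the paper. You rescale $r=M_nu$ and read the left-hand side as the integral of the test function $f(s)u^2\one(0<u\le1)$ against the normalized polar measure $P\left(\|H_1\|\in M_n\,du,\ H_1/\|H_1\|\in ds\right)/P(\|H_1\|>M_n)$, which converges vaguely to $\alpha u^{-(1+\alpha)}\,du\otimes\tilde\Gamma(ds)$, so the constant emerges as $\int_0^1u^2\,\alpha u^{-(1+\alpha)}\,du=\alpha/(2-\alpha)$. The paper instead writes $r^2=\int_0^r2y\,dy$, applies Fubini to convert the truncated second moment into $\int_0^{M_n}2y\,P\left(\|H_1\|>y,\cdot\right)dy-M_n^2P\left(\|H_1\|>M_n,\cdot\right)$, factors out $\int_Sf\,d\tilde\Gamma$ via \eqref{e:weak.conv}, and obtains the constant from a single application of Karamata's theorem, $\frac2{2-\alpha}-1=\frac\alpha{2-\alpha}$. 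What each buys: the integration-by-parts route works exclusively with tail probabilities, which is the form in which regular variation is handed to you, so the singular region near the origin never has to be isolated; your route is more geometric and exhibits the limiting polar product measure explicitly, but it forces the truncation step near $u=0$ that you correctly flag --- $u^2$ is not a compactly supported test function on $(0,1]$ and the limit measure is infinite near $0$ --- and closing that gap, e.g. via $E\bigl[\|H_1\|^2\one(\|H_1\|\le\epsilon M_n)\bigr]\big/\bigl(M_n^2P(\|H_1\|>M_n)\bigr)\to\frac\alpha{2-\alpha}\epsilon^{2-\alpha}\to0$ as $\epsilon\to0$, consumes exactly the Karamata/Potter input that the paper invokes once; so the two arguments use the same analytic ingredients in different packaging. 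Two details to record in a full write-up: the discontinuity of your test function at $u=1$ is harmless because $\{u=1\}\times S$ is a null set of the limit measure; and since \eqref{e:power.tails} is normalized along the sequence $b_n$, you should state explicitly that regular variation, as a statement in the continuous variable $r\to\infty$, justifies renormalizing along the arbitrary truncation sequence $M_n\to\infty$ --- the same point is implicit when the paper applies \eqref{e:weak.conv} at levels up to $M_n$.
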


\begin{proof} Assumption \eqref{e:no.truncation} means that
\begin{equation} \label{e:weak.conv}
\frac{P\left( \| H_1\|>r,\, \frac{H_1}{\| H_1\|}\in\cdot\right)}{P\bigl(
 \| H_1\|>r\bigr)}\Longrightarrow   \tilde\Gamma(\cdot)
\end{equation}
weakly on $S$; see e.g.  Corollary 6.20 (b) of
\cite{araujo:gine:1980}. Therefore,
$$
\int_S\int_0^{M_n}f(s)r^2P\left(\|H_1\|\in   dr,\frac {H_1}{\|H_1\|}\in
  ds\right)
$$
$$
= \int_0^{M_n} 2y\left( \int_S f(s)P\left(\|H_1\|>y,\,\frac {H_1}{\|H_1\|}\in
  ds\right)\right)dy
$$
$$
- M_n^2 \int_S f(s)P\left(\|H_1\|>M_n,\,\frac {H_1}{\|H_1\|}\in
  ds\right)
$$
$$
\sim \int_Sf(s)\, \tilde\Gamma(ds)\left[ \int_0^{M_n} 2yP\bigl(
  \|H_1\|>y\bigr)\, dy - M_n^2P\bigl( \|H_1\|>M_n\bigr)\right]
$$
$$
\sim \int_Sf(s)\, \tilde\Gamma(ds) \left( \frac{2}{2-\alpha}-1\right)
 M_n^2P\bigl( \|H_1\|>M_n\bigr) = n^{-1}B_n^2  \int_Sf(s)\,
 \tilde\Gamma(ds)
$$
as $n\to\infty$, where the second asymptotic equivalence follows from
the Karamata theorem (see e.g. \cite{resnick:1987}).
\end{proof}

\begin{proof}[Proof of Theorem \ref{t2}] By the Cram\'er-Wold device it
suffices to show that for every $\theta$ in ${\mathbb R}^d$,
$$
B_n^{-1}\bigl(\langle \theta,S_n\rangle-E\langle
\theta,S_n\rangle\bigr)\Rightarrow
N\left(0,\frac2{2-\alpha}\int_S\langle
  \theta,s\rangle^2\tilde\Gamma(ds)\right)\,.
$$
To this end we will use the Central Limit Theorem for triangular arrays
under the Lindeberg condition; see e.g. Theorem 2.4, page 345 in
\cite{gut:2005}. We need to prove that
\begin{equation} \label{e:var}
\lim_{n\to\infty}\frac{n}{B_n^2}\Var\bigl(\langle
\theta,X_{n1}\rangle\bigr)  = \frac2{2-\alpha}\int_S\langle
  \theta,s\rangle^2\tilde\Gamma(ds)
\end{equation}
and that for every $\vep>0$,
\begin{equation} \label{e:lindeberg}
 \frac{n}{B_n^2} E\biggl( \bigl| \langle
\theta,X_{n1}\rangle -
E\bigl(\langle\theta,X_{n1}\rangle\bigr)\bigr|^2
\one\Bigl( \bigl| \langle\theta,X_{n1}\rangle -
E\bigl(\langle\theta,X_{n1}\rangle\bigr)\bigr|>\vep B_n\Bigr)\biggr) \to
0
\end{equation}
as $n\to\infty$. In order to prove \eqref{e:var}, we will show that
\begin{equation} \label{e:2ndmom}
\lim_{n\to\infty}\frac{n}{B_n^2}E\bigl((\langle
\theta,X_{n1}\rangle)^2\bigr)  = \frac2{2-\alpha}\int_S\bigl(\langle
  \theta,s\rangle\bigr)^2\tilde\Gamma(ds)
\end{equation}
while
\begin{equation} \label{e:1stmom}
\lim_{n\to\infty}\frac{n^{1/2}}{B_n}\bigl|E\bigl(\langle
\theta,X_{n1}\rangle\bigr)\bigr|  = 0\,.
\end{equation}
The former claim follows easily from Lemma \ref{clt.l5} and the weak
convergence \eqref{e:weak.conv} by writing
$$
E\bigl((\langle\theta,X_{n1}\rangle)^2\bigr)  =
E\Bigl((\langle\theta,H_{1}\rangle)^2\one\bigl(\|H_1\|\le
M_n\bigr)\Bigr)
$$
$$
+ E\left(
  \frac{\bigl(\langle\theta,H_{1}\rangle\bigr)^2}{\|H_1\|^2}
  (M_n+R_1)^2 \one\bigl(\|H_1\|> M_n\bigr)\right)
$$
$$
\sim n^{-1}B_n^{2} \frac\alpha{2-\alpha}\int_S\bigl(\langle
  \theta,s\rangle\bigr)^2\, \tilde\Gamma(ds)
+ \bigl( 1+o(1)\bigr) M_n^2  E\left(
  \frac{\bigl(\langle\theta,H_{1}\rangle\bigr)^2}{\|H_1\|^2}
\one\bigl(\|H_1\|> M_n\bigr)\right)
$$
$$
\sim n^{-1}B_n^{2} \frac\alpha{2-\alpha}\int_S\bigl(\langle
  \theta,s\rangle\bigr)^2\, \tilde\Gamma(ds)
+ M_n^2P\bigl(\|H_1\|> M_n\bigr) \int_S\bigl(\langle
  \theta,s\rangle\bigr)^2\, \tilde\Gamma(ds)
$$
$$
= n^{-1}B_n^{2} \left( \frac\alpha{2-\alpha}+1\right)
\int_S\bigl(\langle
  \theta,s\rangle\bigr)^2\, \tilde\Gamma(ds)
= n^{-1}B_n^{\alpha}\frac\alpha{2-\alpha} \int_S\bigl(\langle
  \theta,s\rangle\bigr)^2\, \tilde\Gamma(ds)\,.
$$
For \eqref{e:1stmom} we write
$$
\bigl|E\bigl(\langle \theta,X_{n1}\rangle\bigr)\bigr|
\leq \|\theta\| \Bigl[ E\bigl( \| H_1\|\one( \| H_1\|\leq M_n)\bigr)
+ M_nP\bigl( \| H_1\|>M_n\bigr)\Bigr]\,.
$$
Since
$$
M_nP\bigl( \| H_1\|>M_n\bigr) \ll M_n\bigl( P( \|
H_1\|>M_n)\bigr)^{1/2}
= n^{-1/2}B_n\,,
$$
the claim \eqref{e:1stmom} will follow once we check that
\begin{equation}\label{t2.e10}
\lim_{n\rightarrow\infty}n^{1/2}B_n^{-1}E\left[\|H_1\|\one(\|H_1\|\le
    M_n)\right]=0 \,.
\end{equation}
We give separate arguments for the cases $\alpha\le 1$ and $\alpha>1$.\\
{\bf Case 1 ($\alpha\le1$):} Letting $C$ be a positive
constant whose value may change from line to line, by the Karamata
theorem,
\begin{eqnarray*}
E\left[\|H_1\|\one(\|H_1\|\le M_n)\right]
&\le&\bigl( E\left[\|H_1\|^{3/2}\one(\|H_1\|\le M_n)\right]\bigr)^{2/3}\\
&\sim&CM_n\bigl( P(\|H_1\|>M_n)\bigr)^{2/3}\\
&=&Cn^{-1/2}B_n\bigl( P(\|H_1\|>M_n)\bigr)^{1/6}\\
&\ll&n^{-1/2}B_n\,.
\end{eqnarray*}

{\bf Case 2 ($1<\alpha<2$):} Here (\ref{t2.e10}) follows trivially
from the fact that $E\left[\|H_1\|\one(\|H_1\|\le
    M_n)\right]$ has a finite limit, while
$B_n\gg n^{1/2}$ as $\alpha<2$.

We have now proved \eqref{e:var}. By \eqref{e:1stmom},
the remaining condition \eqref{e:lindeberg} will follow once we check
that for every $\vep>0$,
$$
\frac{n}{B_n^2}E\Bigl( \bigl| \langle
\theta,X_{n1}\rangle \bigr|^2 \one\bigl( \bigl|
\langle\theta,X_{n1}\rangle\bigr| >\vep B_n\bigr)\Bigr)\to 0\,.
$$
This is, however, an immediate consequence of the fact that the hard
truncation implies that $B_n\gg M_n$ as $n\to\infty$.
\end{proof}

\begin{remark} \label{rk:interm}
{\rm We briefly address the behavior of the partial sums of the
random vectors with truncated heavy tails in the intermediate
regime
\begin{equation} \label{e:interm}
\lim_{n\rightarrow\infty}nP(\|H\|>M_n)=\delta\in(0,\infty)\,.
\end{equation}
It turns out that, in this case, one can use the same centering and
scaling sequences $\{c_n\}$ and $\{b_n\}$ as in the non-truncated case
\eqref{e:no.truncation} (or in the soft truncation regime
\eqref{e:soft.same}), but the limit will be different. In fact,
\begin{equation} \label{e:interm.limit}
b_n^{-1}S_n-c_n\Longrightarrow \rho_\delta\,,
\end{equation}
where $\rho_\delta$ is an infinitely divisible law on $\bbr^d$, which
is obtained by a certain truncation of the jumps of the
$\alpha$-stable law $\rho$ in \eqref{e:levy-khnichine}. Specifically,
\begin{equation} \label{e:rho.delta}
\hat\rho_\delta(\theta)=\exp\biggl[i\langle
\theta,\gamma_\delta\rangle
\end{equation}
$$
+\int_{S}\biggl(
\int_0^{\delta^{-1/\alpha}(\alpha^{-1}\Gamma(S))^{1/\alpha}}
\left\{e^{ix\langle
      \theta,s\rangle}-1-ix\langle \theta,s\rangle \one(x\le
    1)\right\}x^{-(1+\alpha)}\, dx
$$
$$
+ \delta\Gamma(S)^{-1} \left\{ e^{i
\delta^{-1/\alpha}(\alpha^{-1}\Gamma(S))^{1/\alpha}
\langle \theta,s\rangle}-1\right\}
\biggr)\Gamma(ds)\biggr]
$$
for $\theta\in\bbr^d$, where
$$
\gamma_\delta = \gamma -
\int_{\delta^{-1/\alpha}(\alpha^{-1}\Gamma(S))^{1/\alpha}}^\infty
x^{-\alpha} \one(x\le 1)\, dx \int_S s\, \Gamma(ds)\,.
$$
We sketch the argument. Write
\begin{equation}\label{e:interm.1}
b_n^{-1}S_n-c_n =
\left(b_n^{-1}\sum_{j=1}^nH_j\one\bigl(\|H_j\|\le M_n\bigr)
-c_n\right)
\end{equation}
$$
+  b_n^{-1}M_n \sum_{j=1}^n
\frac{H_j}{\|H_j\|}\one\bigl(\|H_j\|>M_n\bigr)
+ b_n^{-1} \sum_{j=1}^n
\frac{H_j}{\|H_j\|}R_j\one\bigl(\|H_j\|>M_n\bigr)\,.
$$
It is easy to check that the last term in the right hand side of
\eqref{e:interm.1} converges to zero in probability. Since
\eqref{e:interm} implies that
\begin{equation}\label{e:ratio}
\frac{M_n}{b_n} \to
\delta^{-1/\alpha}(\alpha^{-1}\Gamma(S))^{1/\alpha}
\end{equation}
as $n\to\infty$,  Theorem 5.9, p. 129, of \cite{araujo:gine:1980}
implies that the first term in the right hand side of
\eqref{e:interm.1} has a weak limit whose characteristic function is
given by
$$
\exp\biggl[i\langle
\theta,\gamma_\delta\rangle
$$
$$
+
\int_{S}\biggl(
\int_0^{\delta^{-1/\alpha}(\alpha^{-1}\Gamma(S))^{1/\alpha}}
\left\{e^{ix\langle
      \theta,s\rangle}-1-ix\langle \theta,s\rangle \one(x\le
    1)\right\}x^{-(1+\alpha)}\, dx \biggr)\Gamma(ds)\biggr]\,.
$$
Finally, it follows from \eqref{e:ratio} that the second term in
the right hand side of \eqref{e:interm.1} is asymptotically
equivalent to
$$
\delta^{-1/\alpha}(\alpha^{-1}\Gamma(S))^{1/\alpha}
\sum_{j=1}^n
\frac{H_j}{\|H_j\|}\one\bigl(\|H_j\|>M_n\bigr)\,,
$$
and, by \eqref{e:weak.conv} and \eqref{e:interm},
 the sum above converges weakly to the law of the Poisson sum
$\sum_{j=1}^N Y_j$, where $Y_1,Y_2,\dots$ are i.i.d. $S$-valued random
variables with the common law $\tilde\Gamma$, and $N$ is an independent
of them Poisson random variable with mean $\delta$. Since the weak
limits of the first and the second terms in the right hand side of
\eqref{e:interm.1} are easily seen to be independent, this shows
\eqref{e:interm.limit}.
}
\end{remark}

\section{Hill estimator for random variables with truncated power tails} \label{sec:Hill}

Estimating the tail exponent $\alpha$ is one of the main
statistical issues one faces when working with data for which a
model with power tails is contemplated. This is a difficult
statistical problem because one attempts to estimate a parameter
governing the tail behavior in an otherwise nonparametric model.
By necessity, any estimator one uses has to be based on a
vanishing fraction of the available data. The situation is even
trickier when one tries to estimate the tail exponent in a sample
of observations with truncated power tails. This is the task we
address in this section.

The formal setup in this section is as follows. We are given a
sample $X_1,\ldots, X_n$ of {\bf one-dimensional nonnegative
observations} from the model with truncated power tails, i.e.
\eqref{e:the.model}. We emphasize a slight change in notation from
\eqref{e:the.model}: whereas the latter used the notation
$X_{n1},\ldots, X_{nn}$ to emphasize the triangular array nature
of the model, in a statistical procedure, when a single sample
({\it i.e.}, a particular row of the triangular array) is given,
the notation $X_1,\ldots, X_n$ is more natural. The discussion in
Section \ref{sec:CLT} makes it intuitive that estimating the tail
exponent $\alpha$ should be easier if the tails are truncated
softly, than in the case when the tails are truncated hard. This
is, indeed, the case. However, in this section we are interested
in finding a procedure that permits consistent estimation of the
tail exponent $\alpha$ regardless of the truncation regime; this
is especially important because the truncation regime is never
known (see, however, Section \ref{sec:regime.test} below).
Furthermore, in this section we do not restrict the values of the
tails exponent to the interval $(0,2)$. That is, $\alpha$ can take
any positive value.

A number of estimators of the tail exponent of distributions with
nontruncated power tails
have been suggested; a thorough discussion can
be found in Chapter 4 of \cite{dehaan:ferreira:2006}. One of the best
known and widely used estimators is the {\it Hill estimator}
introduced by \cite{hill:1975}. Given a sample $X_1,\ldots, X_n$. the
Hill statistic is defined by
\begin{equation} \label{e;hill}
h_{n,k} = \frac1{k}\sum_{i=1}^{k}\log\frac{X_{(i)}}{X_{(k)}}\,,
\end{equation}
where $X_{(1)}\ge X_{(2)}\ge\ldots\ge X_{(n)}$ are the order
statistics from the sample $X_{1},\ldots,X_{n}$, and $k=1,\ldots, n$
is a user-determined parameter, the number of the upper order
statistics to use in the estimator. The consistency result for the
Hill estimator says that, if $X_1,\ldots, X_n$ are i.i.d. with
regularly varying right tail with exponent $\alpha>0$, and
$k=k_n\to\infty$, $k_n/n\to 0$ as $n\to\infty$, then $h_{n,k_n} \to
1/\alpha$ in probability as $n\to\infty$; see e.g. Theorem 3.2.2 in
\cite{dehaan:ferreira:2006}.

In spite of the simplicity of the  statement of the consistency of
the Hill estimator, selecting the  number $k$ of the upper order
statistics for a given sample with nontruncated power tails
remains a daunting problem; see e.g. pp. 192-193 in
\cite{embrechts:kluppelberg:mikosch:1997}. In the main result of
this section, Theorem \ref{hill.t1} below, we will see that one
has to be particularly careful when using the Hill estimator on a
sample with truncated power tails. Nonetheless, a consistent
estimator can still be obtained.

Notice that the next theorem does not impose any conditions on the
random variables $R_1,R_2,\ldots$ in the model
\eqref{e:the.model}.

\begin{theorem}\label{hill.t1}
Suppose that the number $k_n$ of the  upper order statistics satisfies
\begin{equation} \label{e:hill.range}
nP(H_1>M_n)+1\ll k_n\ll n\,.
\end{equation}
Then $h_{n,k_n} \to 1/\alpha$ in probability as $n\to\infty$.
\end{theorem}
Note that Theorem \ref{hill.t1} says that, in the soft truncation
regime, the Hill estimator is consistent under the same assumption,
$k_n/n\to 0$, as in the nontruncated case.
\begin{proof}
For simplicity, we write $k$ instead of $k_n$.
An inspection of the proof of consistency of the Hill estimator in the
nontruncated case in e.g. \cite{resnick:2007} shows that the result
will follow once we check that, under the conditions of the theorem,
\begin{equation}\label{hill.l1.e1}
\frac nkP\left[\frac{X_{n1}}{b(n/k)}\in\cdot\right]\stackrel
 v\longrightarrow\mu(\cdot)
\end{equation}
vaguely in $(0,\infty]$, where $\mu$ is a measure on
$(0,\infty]$ defined by
$$
\mu((x,\infty])=x^{-\alpha}\mbox{ for all }x>0\,,
$$
and
$$
b(u) = \inf\bigl\{ x>0:\, P(H_1>x)\leq u^{-1}\bigr\},\,
u>0\,.
$$
Note that $(b(n))$ is no longer necessarily a sequence satisfying
\eqref{e:no.truncation}. In fact, we will use this notation several
times in the sequel to denote other quantile-type functions associated
with the random variable $H_1$.

By the hypothesis,
$$
\lim_{n\rightarrow\infty}\frac nkP(H_1>M_n)=0
$$
and, hence, $b(n/k)\ll M_n$ as $n\to\infty$. Therefore, for any $x>0$,
for $n$ large enough,
\begin{eqnarray*}
P\left[\frac{X_{n1}}{b(n/k)}>x\right]&=&P\bigl( H_1>xb(n/k)\bigr)\\
&\sim&\frac knx^{-\alpha}
\end{eqnarray*}
where the last line follows from the hypothesis $k\ll n$ and regular
variation of the tail of $H_1$.  This shows (\ref{hill.l1.e1}).
\end{proof}

Since the truncation level $M_n$ is not known, it is desirable to have
a sample-based way of deciding on the number of upper order statistics
to use in the Hill estimator. A natural (in view of the condition
\eqref{e:hill.range}) choice is to use {\it a random number} of upper
order statistics given by
\begin{equation} \label{e:k.random}
{\hat k}_n = \left[n\left( \frac1n \sum_{j=1}^n \one\bigl(
  X_j>\gamma\max_{i=1,\ldots, n} X_i\bigr)\right)^\beta\right]\,,
\end{equation}
where $\gamma$ and $\beta$ are user-specified parameters taking values
in $(0,1)$ and $[\cdot]$ denotes the integer part. The  next two
theorems show that this choice 
of the number of upper order statistics leads to a consistent
estimator of the reciprocal of the tail exponent. The parameters
$\beta$ and $\gamma$ can and should be chosen by the user, and the
consistency of the estimator depends neither on their choice nor on,
for instance, second order regular variation of $H_1$. In practice,
one should, probably, use some version of the so-called Hill plot (see
e.g. \cite{embrechts:kluppelberg:mikosch:1997}), by
calculating the value of the estimator for a range of $\beta$ and
$\gamma$. We leave this issue for future work. 

We start with proving consistency of the Hill estimator with the
random choice of the number of upper order statistics in the
hard truncation regime. Here the consistency requires that the tail
of the random variable $R_1$ is sufficiently light, relatively to the
truncation level. 

\begin{theorem}\label{hill.t5}
In the hard truncation regime assume, additionally, that 
\begin{equation}\label{assume1}
P(H_1>M_n)P(R_1>\epsilon M_n)=o(1/n)\ \mbox{ for all }\ \epsilon>0.
\end{equation}
If $\hat k_n$ is chosen as in \eqref{e:k.random}, then $h_{n,\hat k_n}
\to 1/\alpha$ in probability as $n\to\infty$. 
\end{theorem}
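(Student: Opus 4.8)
The plan is to reduce the whole statement to the deterministic consistency result of Theorem \ref{hill.t1}. That theorem gives $h_{n,k_n}\to 1/\alpha$ whenever $nP(H_1>M_n)+1\ll k_n\ll n$, so it suffices to establish two things: first, that the random, data-dependent count $\hat k_n$ is, with probability tending to one, comparable to a deterministic sequence $k_n^\ast$ lying in this admissible range; and second, that consistency survives the replacement of a deterministic $k_n$ by the random $\hat k_n$.

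The first task is to pin down $V_n:=\max_{i=1,\ldots,n}X_i$, for which I would show $V_n/M_n\prob 1$. For the lower bound, in the hard truncation regime $nP(H_1>M_n)\to\infty$, so $P(\text{no }j\le n\text{ with }H_j>M_n)=(1-P(H_1>M_n))^n\to 0$; whenever such a $j$ exists, $X_j=M_n+R_j\ge M_n$, giving $V_n\ge M_n$. For the upper bound I would use assumption \eqref{assume1}: for every $\vep>0$,
\[
P\Bigl(\max_{j:\,H_j>M_n}R_j>\vep M_n\Bigr)\le nP(H_1>M_n)P(R_1>\vep M_n)=o(1),
\]
so with probability tending to one every truncated observation satisfies $X_j\le(1+\vep)M_n$ while every non-truncated one is at most $M_n$; hence $M_n\le V_n\le(1+\vep)M_n$. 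This is exactly where the light-tail condition \eqref{assume1} is indispensable: without it the $R_j$ could inflate the sample maximum far above $M_n$ and destroy the calibration of $\hat k_n$.

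Next I would analyze $\hat p_n:=n^{-1}\sum_{j=1}^n\one(X_j>\gamma V_n)$. The key elementary observation is that, since $\gamma<1$, for any constant $c<1$ one has $\{X_j>cM_n\}=\{H_j>cM_n\}$ (a truncated observation always exceeds $cM_n$, a non-truncated one equals $H_j$). Combined with $\gamma M_n\le\gamma V_n\le\gamma(1+\vep)M_n$ from the previous step, this bounds $n\hat p_n$ between $\sum_j\one(H_j>\gamma(1+\vep)M_n)$ and $\sum_j\one(H_j>\gamma M_n)$ with high probability. Each is a binomial count whose mean $nP(H_1>cM_n)\sim c^{-\alpha}nP(H_1>M_n)$ tends to infinity, so Chebyshev's inequality yields concentration about the mean; letting $\vep\downarrow 0$ and using regular variation of the tail of $H_1$ gives $\hat p_n\sim\gamma^{-\alpha}P(H_1>M_n)$ in probability. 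Consequently $\hat k_n\sim k_n^\ast:=n\gamma^{-\alpha\beta}\bigl(P(H_1>M_n)\bigr)^\beta$ in probability, and one checks admissibility: $k_n^\ast\ll n$ because $P(H_1>M_n)\to0$, while $nP(H_1>M_n)\ll k_n^\ast$ because $\bigl(P(H_1>M_n)\bigr)^{1-\beta}\to0$ — this is precisely the point at which the restriction $\beta<1$ is used.

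The step I expect to be the main obstacle is transferring the deterministic-$k$ consistency of Theorem \ref{hill.t1} to the random, data-dependent $\hat k_n$. Because the Hill statistic is not monotone in $k$, one cannot simply sandwich $h_{n,\hat k_n}$ between its values at two deterministic endpoints. The route I would take is to fix $\delta>0$, let $k_n^\pm$ be the integer parts of $(1\pm\delta)k_n^\ast$, and strengthen the single-$k$ convergence underlying Theorem \ref{hill.t1} to the uniform statement
\[
\sup_{k_n^-\le k\le k_n^+}\bigl|h_{n,k}-1/\alpha\bigr|\prob 0 .
\]
This uniform control rests on the weak convergence of the tail empirical measure (the locally uniform version of \eqref{hill.l1.e1}) over the scaling levels $b(n/k)$ with $k$ ranging over $[k_n^-,k_n^+]$, all asymptotically equivalent to $b(n/k_n^\ast)$ since $b$ is regularly varying. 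Since the previous step gives $\hat k_n/k_n^\ast\prob1$, the event $\{k_n^-\le\hat k_n\le k_n^+\}$ has probability tending to one, and the conclusion follows. Controlling the oscillation of the Hill statistic over this window, while simultaneously handling the dependence between $\hat k_n$ and the order statistics entering $h_{n,\hat k_n}$, is the technical heart of the argument.
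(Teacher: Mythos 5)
Your calibration of $\hat k_n$ is correct and is essentially the paper's own first step: the paper also proves $\hat M_n/M_n\prob 1$ from \eqref{assume1} together with hard truncation (its \eqref{l1.3}), establishes concentration of the exceedance counts (its \eqref{l1.1}), and concludes $\hat k_n/k_n\prob 1$ for $k_n=[nP(H_1>\gamma M_n)^\beta]$, which equals your $k_n^\ast$ up to the regular-variation identity $P(H_1>\gamma M_n)\sim\gamma^{-\alpha}P(H_1>M_n)$. The genuine gap is in your final step. You reduce the theorem to the uniform statement $\sup_{k_n^-\le k\le k_n^+}|h_{n,k}-1/\alpha|\prob 0$ and assert that it ``rests on the weak convergence of the tail empirical measure,'' but you neither prove this nor can you cite it: the known functional results for the Hill process (Resnick's $D[1,\infty)$ convergence) are stated for i.i.d.\ samples from a fixed regularly varying law, while here the data form a triangular array truncated at $M_n$, and in the \emph{hard} regime one cannot couple $(X_j)$ with the untruncated $(H_j)$, since the probability that some observation is truncated tends to one. (That coupling is exactly how the paper handles the soft regime in Theorem \ref{hill.t6}; it is unavailable here.) So the step you explicitly set aside as ``the technical heart'' is precisely the content that has to be supplied; as written, the proof is incomplete.

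For comparison, the paper closes this step with no functional limit theorem at all. Fix $\epsilon>0$ and choose $\eta<1/2$ with $\alpha^{-1}\log\frac{1+\eta}{1-\eta}<\epsilon/3$; then bound the probability that $\bigl|\frac1{k_n}\sum_{i=1}^{\hat k_n}\log(X_{(i)}/X_{(\hat k_n)})-1/\alpha\bigr|>\epsilon$ by three terms: the deterministic-$k_n$ consistency from Theorem \ref{hill.t1}, the term $P[|\hat k_n/k_n-1|\ge\eta]$ controlled by your step 1, and the single quantity $P\bigl[-\log\bigl(X_{([k_n(1+\eta)])}/X_{([k_n(1-\eta)])}\bigr)>\epsilon/3\bigr]$. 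The key observation is deterministic: on the event $\{|\hat k_n/k_n-1|<\eta\}$, monotonicity of order statistics bounds the discrepancy between the Hill sums at $\hat k_n$ and at $k_n$ by roughly $(1+\eta)\log\bigl(X_{([k_n(1-\eta)])}/X_{([k_n(1+\eta)])}\bigr)$, so only the \emph{two} order statistics at the endpoints of the window need to be controlled — a two-point (finite-dimensional) statement, not a uniform one. Their ratio converges in probability to $\bigl(\frac{1+\eta}{1-\eta}\bigr)^{-1/\alpha}$ by the same single-level tail-empirical-measure argument that proves Theorem \ref{hill.t1}, using only $k_n\gg nP(H_1>M_n)$; by the choice of $\eta$ this makes the middle probability vanish. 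Replacing your unproven uniform claim with this two-point sandwich is what it would take to complete your argument.
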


\begin{proof}
We start by showing  that, as $n\longrightarrow\infty$,
\begin{equation}\label{hill.t5.eq3}
\frac{\hat k_n}{k_n}\prob1\,,
\end{equation}
where
$$
k_n:=\left[nP(H_1>\gamma M_n)^\beta\right]\,.
$$
Clearly, all that needs to be shown is
\begin{equation}\label{l1.2}
\frac{\sum_{j=1}^n\one(X_{j}>\gamma\hat M_n)}{nP(H_1>\gamma
  M_n)}\stackrel P\longrightarrow1, 
\end{equation}
where $\hat M_n:=\max_{1\le i\le n}X_{i}$. By (\ref{assume1}), 
\begin{equation}\label{l1.3}
\frac{\hat M_n}{M_n}\stackrel P\longrightarrow1\,.
\end{equation}
Further, because of the hard truncation, 
\begin{equation}\label{l1.1}
\frac{\sum_{j=1}^n\one\left(j:X_{j}>\theta_1M_n\right)}{nP(H>\theta_2M_n)}
\stackrel
P\longrightarrow\left(\frac{\theta_1}{\theta_2}\right)^{-\alpha}  
\end{equation}
for any $0<\theta_1<1$ and $\theta_2>0$. 

Fix $0<\epsilon<1$. Choose first $0<\eta<1$ be such that
$(1-\eta)^{-\alpha}<1+\epsilon$. Writing 
\begin{eqnarray*}
&&P\left[\frac{\sum_{j=1}^n\one\left(X_{j}>\gamma \hat
      M_n\right)}{nP(H_1>\gamma  M_n)}>1+\epsilon\right]\\ 
&\le&P[(1-\eta)M_n>\hat
    M_n]+P\left[\frac{\sum_{j=1}^n\one\left(X_{j}>\gamma (1-\eta)
      M_n\right)}{nP(H_1>\gamma  M_n)}>1+\epsilon\right]
\end{eqnarray*}
and using (\ref{l1.3}) and (\ref{l1.1}), we see that 
$$
P\left[\frac{\sum_{j=1}^n\one\left(X_{j}>\gamma \hat
    M_n\right)}{nP(H_1>\gamma
    M_n)}>1+\epsilon\right]\longrightarrow0\,.
$$ 
Next we choose $\eta>0$ such that $\gamma(1+\eta)<1$ and
$(1+\eta)^{-\alpha}>1-\epsilon$. Writing 
\begin{eqnarray*}
&&P\left[\frac{\sum_{j=1}^n\one\left(X_{j}>\gamma \hat
      M_n\right)}{nP(H_1>\gamma  M_n)}<1-\epsilon\right]\\ 
&\le&P[(1+\eta)M_n<\hat
    M_n]+P\left[\frac{\sum_{j=1}^n\one\left(X_{j}>\gamma (1+\eta)
      M_n\right)}{nP(H_1>\gamma  M_n)}<1-\epsilon\right]\,, 
\end{eqnarray*}
and appealing, once again, to (\ref{l1.3}) and (\ref{l1.1}), we obtain 
\begin{eqnarray*}
P\left[\frac{\sum_{j=1}^n\one\left(X_{j}>\gamma \hat
    M_n\right)}{nP(H_1>\gamma  M_n)}<1-\epsilon\right]\longrightarrow0\,, 
\end{eqnarray*}
which establishes (\ref{l1.2}), and, hence, also \eqref{hill.t5.eq3}.

In view of  \eqref{hill.t5.eq3}, it suffices to show that
$$
\frac1{k_n}\sum_{i=1}^{\hat k_n}\log\frac{X_{(i)}}{X_{(\hat
    k_n)}}\stackrel P\longrightarrow\frac1\alpha\,.
$$
Fix $\epsilon>0$. We choose $0<\eta<1/2$ so that
$\alpha^{-1}\log\frac{1+\eta}{1-\eta}<\frac\epsilon3$, and write 
\begin{eqnarray*}
&&P\left[\left|\frac1{k_n}\sum_{i=1}^{\hat
      k_n}\log\frac{X_{(i)}}{X_{(\hat
        k_n)}}-\frac1\alpha\right|>\epsilon\right]\\ 
&\le&P\left[\left|\frac1{k_n}\sum_{i=1}^{k_n}\log\frac{X_{(i)}}{X_{(k_n)}}
-\frac1\alpha\right|>\frac\epsilon3\right] 
+P\left[-\log\frac{X_{([k_n(1+\eta)])}}{X_{([k_n(1-\eta)])}}>\frac\epsilon3\right]\\ 
&&+P\left[\left|\frac{\hat k_n}{k_n}-1\right|\ge\eta\right]\,.
\end{eqnarray*}
It follows from Theorem \ref{hill.t1} that
$$
P\left[\left|\frac1{k_n}\sum_{i=1}^{k_n}\log\frac{X_{(i)}}{X_{(k_n+1)}}
-\frac1\alpha\right|>\frac\epsilon3\right]\longrightarrow0\,.
$$ 
Since $k_n\gg nP(H_1>M_n)$, we see that 
$$
\frac{X_{([k_n(1+\eta)])}}{X_{([k_n(1-\eta)])}}\stackrel
P\longrightarrow\left(\frac{1+\eta}{1-\eta}\right)^{-1/\alpha}\,.
$$ 
Therefore, by the choice of $\eta$, 
$$
P\left[-\log\frac{X_{([k_n(1+\eta)])}}{X_{([k_n(1-\eta)])}}>\frac\epsilon3\right]
\longrightarrow0\,.
$$ 
In conjunction with \eqref{hill.t5.eq3}, this completes the argument. 
\end{proof}

Next, we show consistency of the Hill estimator using the random
number \eqref{e:k.random} of upper order statistics in the soft
truncation regime. Note that no 
assumption on the tail of $R_1$ is necessary in this case.

\begin{theorem}\label{hill.t6} In the soft truncation regime, if $\hat
  k_n$ is chosen as in \eqref{e:k.random}, then $h_{n,\hat k_n} 
\to 1/\alpha$ in probability as $n\to\infty$. 
\end{theorem}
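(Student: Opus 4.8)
The plan is to reduce the problem to an untruncated i.i.d.\ heavy-tailed sample and then to prove a \emph{uniform} consistency statement for the Hill estimator over a deterministic window of indices that, with high probability, contains the random index $\hat k_n$. First I would exploit the defining feature of the soft truncation regime. Since $nP(H_1>M_n)\to0$, the probability that at least one of $H_1,\ldots,H_n$ exceeds $M_n$ is bounded by $nP(H_1>M_n)\to0$. On the complementary event $A_n$ we have $X_j=H_j$ for every $j=1,\ldots,n$, so the maximum $\hat M_n$, the counting variable in \eqref{e:k.random}, the index $\hat k_n$, the order statistics, and hence $h_{n,\hat k_n}$ all coincide with the corresponding quantities computed from the untruncated sample $H_1,\ldots,H_n$. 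Because $P(A_n)\to1$, it therefore suffices to prove the stated convergence for the i.i.d.\ regularly varying sample $H_1,\ldots,H_n$; note that no assumption on $R_1$ enters, in agreement with the statement.

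Next I would locate $\hat k_n$. In contrast with the hard truncation regime of Theorem \ref{hill.t5}, where the count $\sum_j\one(X_j>\gamma\hat M_n)$ has a mean tending to infinity and hence concentrates, here $V_n:=\sum_{j=1}^n\one(H_j>\gamma\max_i H_i)$ converges in distribution to a nondegenerate limit: by the point process convergence $\sum_j\delta_{H_j/b(n)}\Rightarrow N$, a Poisson random measure on $(0,\infty]$ with intensity $\alpha x^{-\alpha-1}\,dx$, $V_n$ converges to the number of points of $N$ lying above $\gamma$ times its maximal point, an almost surely finite integer that is at least $1$. Consequently $V_n$ is tight and $V_n\ge1$ deterministically, so that $\hat k_n=[n^{1-\beta}V_n^\beta]$ satisfies $[n^{1-\beta}]\le\hat k_n$ always, while for every $\delta>0$ there is $c<\infty$ with $P(\hat k_n\le c\,n^{1-\beta})\ge1-\delta$ for all large $n$. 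Writing $a_n=[n^{1-\beta}]$ and $A_n=[c\,n^{1-\beta}]$, both sequences satisfy the hypotheses $nP(H_1>M_n)+1\ll k_n\ll n$ of Theorem \ref{hill.t1}, since $\beta\in(0,1)$ and $nP(H_1>M_n)\to0$.

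The crux, and the step I expect to be the main obstacle, is to show that $h_{n,k}$ is consistent \emph{uniformly} over the window, that is,
\[
\sup_{a_n\le k\le A_n}\Bigl|h_{n,k}-\tfrac1\alpha\Bigr|\prob0 .
\]
Here the argument of Theorem \ref{hill.t5} is unavailable: because $V_n$ does not concentrate, $\hat k_n/k_n$ does not converge to $1$ for any deterministic $k_n$, and the crude monotonicity bound $\frac{a_n}{A_n}h_{n,a_n}\le h_{n,k}\le\frac{A_n}{a_n}h_{n,A_n}$ only traps $h_{n,\hat k_n}$ in a window of constant multiplicative width around $1/\alpha$, which is not enough. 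To obtain genuine uniformity I would pass through the tail empirical process with a single normalization: with $r_n=n^{1-\beta}$ and $\hat T_n(x)=r_n^{-1}\sum_{j=1}^n\one(H_j>x\,b(n/r_n))$ one has $\sup_{x\ge\varepsilon}|\hat T_n(x)-x^{-\alpha}|\prob0$ for each $\varepsilon>0$, a Glivenko--Cantelli property of tail empirical measures of regularly varying data (see e.g.\ \cite{resnick:2007}). Using the representation $h_{n,k}=\frac{r_n}{k}\int_{y_k}^\infty\hat T_n(x)\,\frac{dx}{x}$ with $y_k=X_{(k)}/b(n/r_n)$, together with the resulting uniform convergence $y_k\to(k/r_n)^{-1/\alpha}$ over $k\in[a_n,A_n]$ and the tightness of the normalized maximum $X_{(1)}/b(n/r_n)$ to control the contribution of the few largest order statistics to the integral, one obtains $h_{n,k}\to1/\alpha$ uniformly over the window.

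Finally I would assemble the pieces. On the event $\{a_n\le\hat k_n\le A_n\}$ one has $|h_{n,\hat k_n}-1/\alpha|\le\sup_{a_n\le k\le A_n}|h_{n,k}-1/\alpha|$. Combining the tightness bound $P(a_n\le\hat k_n\le A_n)\ge1-\delta$ from the second paragraph with the uniform consistency of the third paragraph yields $\limsup_n P(|h_{n,\hat k_n}-1/\alpha|>\varepsilon)\le\delta$ for every $\varepsilon,\delta>0$, and letting $\delta\downarrow0$ gives the claim.
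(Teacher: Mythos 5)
Your proposal follows, in essence, the same route as the paper's proof: reduce to the untruncated sample using soft truncation, establish consistency of the Hill statistic uniformly over a window of indices of order $n^{1-\beta}$, and locate $\hat k_n$ inside that window via the weak convergence of the count $N_n=\sum_{j}\one\bigl(X_j>\gamma\max_i X_i\bigr)$ to an almost surely finite limit (a consequence of point-process convergence, Proposition 3.21 of \cite{resnick:1987}). The differences are presentational rather than structural. Where you re-derive the uniform consistency from the tail empirical process, the paper simply cites the known fact (\cite{resnick:2007}, p.~89) that the process $\bigl(\tilde h([n^{1-\beta}t],n):t\ge1\bigr)$ converges in probability in $D[1,\infty)$ to the constant $1/\alpha$ --- which is exactly your supremum statement over windows $[n^{1-\beta},c\,n^{1-\beta}]$. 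And where you finish with the elementary bound $|h_{n,\hat k_n}-1/\alpha|\le\sup_{a_n\le k\le A_n}|h_{n,k}-1/\alpha|$ on an event of probability at least $1-\delta$, the paper instead invokes joint convergence (Theorem 4.4 of \cite{billingsley:1968}) of the Hill process together with $N_n^\beta$ and applies the continuous mapping theorem to the evaluation map $(x,a)\mapsto x(a)$; your version of this last step is, if anything, more self-contained.

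There is, however, one incorrect claim in your third paragraph. You propose to control the upper tail of the integral $\int_{y_k}^\infty\hat T_n(x)\,x^{-1}\,dx$ using ``tightness of the normalized maximum $X_{(1)}/b(n/r_n)$.'' That quantity is not tight: $X_{(1)}$ is of order $b(n)$, and with $r_n=n^{1-\beta}$ one has $b(n)/b(n/r_n)=b(n)/b(n^{\beta})\to\infty$ (this ratio is regularly varying in $n$ with positive index $(1-\beta)/\alpha$), so in fact $X_{(1)}/b(n/r_n)\prob\infty$. The repair is standard, and it is how the result you want (and the paper cites) is actually proved in \cite{resnick:2007}: bound the expectation
$E\int_M^\infty\hat T_n(x)\,x^{-1}\,dx=\int_M^\infty (n/r_n)\,P\bigl(H_1>x\,b(n/r_n)\bigr)\,x^{-1}\,dx\le C\,M^{-\alpha+\epsilon}$
for all large $n$, using Potter's bounds (or Karamata's theorem), so that the contribution to the integral beyond a level $M$ is uniformly negligible in probability as $M\to\infty$. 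With that substitution your argument goes through; as written, the tightness step would fail.
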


\begin{proof} Let $\bigl(\tilde h(k,n)\bigr)$ denote the Hill sequence
based on the random variables   $H_1,\ldots,H_n$, i.e. 
$$
\tilde h(k,n):=\frac1k\sum_{i=1}^k\log\frac{H_{(i)}}{H_{(k)}}\,,
$$
where $1\le k\le n$ and $H_{(1)}\ge\ldots\ge H_{(n)}$ are the order
statistics of $H_1,\ldots,H_n$. Let $\beta$ be as in
\eqref{e:k.random}. It is well known that the random step function 
$$
\left(\tilde h([n^{1-\beta}t],n):t\ge1\right)
$$ 
converges in probability in $D[1,\infty)$, equipped with the topology
  of uniform   convergence on bounded intervals, to the constant 
deterministic function $c(t) = 1/\alpha$ for $t\geq 1$; see
\cite{resnick:2007}, page 89. Since 
$$
P\left(\tilde h([n^{1-\beta}t],n)\neq h([n^{1-\beta}t],n)\mbox{ for
  some }t\ge1\right)\le nP(H>M_n)\longrightarrow0\,, 
$$
it follows that 
\begin{equation}\label{hill.t6.eq1}
\left(h([n^{1-\beta}t],n):t\ge1\right)\prob \bigl( c(t): t\geq 1\bigr) 
\end{equation}
in $D[1,\infty)$ as well. 
By Proposition 3.21 (page 154) in \cite{resnick:1987} and soft
truncation, we known that 
$$
N_n:=\sum_{j=1}^n \one\bigl(
  X_j>\gamma\max_{i=1,\ldots, n} X_i\bigr)\Longrightarrow
  N:=\sum_{j=1}^\infty\one\left(\Gamma_j^{-1/\alpha}>\gamma
\Gamma_1^{-1/\alpha}\right)\,, 
$$
where $\left(\Gamma_j:j\ge1\right)$ is the sequence of the arrival
times of the unit rate Poisson process on $(0,\infty)$. Using 
\eqref{hill.t6.eq1} and Theorem 4.4 (page 27) in
\cite{billingsley:1968} we conclude that 
$$
\left(h([n^{1-\beta}t],n),\, N_n^\beta\right)\Longrightarrow
\left(\frac1\alpha,\, N^\beta\right) 
$$
in $D[1,\infty)\times{\mathbb N}^\beta$, where
$
{\mathbb N}^\beta:=\{1,2^\beta,3^\beta,\ldots\}
$.
Since the evaluation map from $D[1,\infty)\times{\mathbb N}^\beta$ to
  $\bbr$ defined by $(x,a)\mapsto x(a)$ 
is continuous, an appeal to the continuous mapping theorem finishes 
the proof. 
\end{proof}

\section{Testing for soft and hard truncation} \label{sec:regime.test}

The first two sections of this paper provide, among other things,
evidence that, in certain important respects, random variables with
truncated heavy tails retain ``most of the tail heaviness'' if the
truncation is soft, but loose ``much of the tail heaviness'' if the
truncation is hard. Since the truncation level is not observed, how
does one decide if the tails of observed data have been truncated softly
or hard? In this section we construct statistical tests for testing
each of the two hypothesis against the corresponding alternative. 
As in Section \ref{sec:Hill} we restrict ourselves to the case of
one-dimensional observations and the tail
exponent $\alpha$ can take any positive value.

Suppose  that we are given a sample
$X_1,\ldots, X_n$ of  one-dimensional observations from the
model  \eqref{e:the.model}. As in Section \ref{sec:Hill}, we do not use
here the triangular array notation. Neither
the precise value of the tail exponent nor the exact distribution of
the random variables $(R_n)$ in  \eqref{e:the.model} are assumed to be
known. However, we will assume that an upper bound on the tail
exponent $\alpha$ is known.

This section is split into three subsections, describing,
correspondingly, testing the hypothesis of soft truncation,
testing the hypothesis of hard truncation, and testing a slightly
stronger version of the latter.

\subsection{Testing the hypothesis of soft truncation}\label{test1}

We consider the following problem of testing a null hypothesis against
a simple alternative:
\begin{equation}\label{hypo2}
\left.\begin{array}{rcl}
H_0:&P(|H_1|>M)\ll n^{-1}&\mbox{(soft truncation)}\\
H_{\rm alt}:&P(|H_1|>M)\gg n^{-1}&\mbox{(hard truncation)}
\end{array}\right\}.
\end{equation}

We assume the tail exponent $\alpha$ satisfies
\begin{equation} \label{e:upper.b.alpha}
\alpha<A<\infty\,,
\end{equation}
i.e. an upper bound on the tail exponent is available.  As a test
statistic we will use
\begin{equation} \label{e:test.stat.1}
Z_n(A):=\frac{\sum_{i=1}^n|X_i|^A}{\max_{1\le i\le n}|X_i|^A}\,.
\end{equation}
The following proposition describes the asymptotic distribution of $Z_n(A)$
under the null hypothesis and under the alternative.

\begin{prop}\label{prop:test1} (i) \ Under the hypothesis $H_0$
  of soft truncation,
\begin{equation} \label{e:test1.h0}
Z_n(A)\Rightarrow \Gamma_1^{A/\alpha}
\sum_{j=1}^\infty \Gamma_j^{-A/\alpha}\,,
\end{equation}
where $(\Gamma_j,\, j\ge1)$ are the arrival times of a unit rate
Poisson process on $(0,\infty)$.

(ii) \ Assume that $ER_1^A<\infty$. Then under the hypothesis
$H_{\rm alt}$   of hard truncation, $Z_n(A)\stackrel P\longrightarrow\infty$.
\end{prop}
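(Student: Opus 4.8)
The two parts require rather different treatments, so I would handle them separately. For part (i), under soft truncation, the plan is to exploit the fact that, since $nP(|H_1|>M_n)\to0$, with probability tending to one \emph{none} of the observations $X_1,\ldots,X_n$ exceeds the truncation level $M_n$, so that $X_i=H_i$ for all $i$ on an event of asymptotic probability one. Thus the limiting behavior of $Z_n(A)$ is identical to that of the corresponding statistic built from the untruncated $H_1,\ldots,H_n$. The task then reduces to the classical computation of $\sum_{i=1}^n|H_i|^A/\max_{i}|H_i|^A$ for i.i.d.\ regularly varying $H_i$ with tail exponent $\alpha$. First I would note that $|H_1|$ has a regularly varying tail with exponent $\alpha$, so that $|H_1|^A$ has a regularly varying tail with exponent $\alpha/A<1$ (this is where the upper bound $\alpha<A$ in \eqref{e:upper.b.alpha} is used, guaranteeing the ratio does not converge to a constant but to a genuinely random limit). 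The standard approach is then a point-process argument: with $b(n)=\inf\{x:P(|H_1|>x)\le n^{-1}\}$, the normalized points $\{|H_i|^A/b(n)^A\}$ converge to a Poisson random measure, and by the continuous-mapping/Proposition~3.21 machinery of \cite{resnick:1987} both the sum $b(n)^{-A}\sum_i|H_i|^A$ and the maximum $b(n)^{-A}\max_i|H_i|^A$ converge jointly to functionals of the points $\Gamma_j^{-1/\alpha}$. Taking the ratio, the normalization $b(n)^A$ cancels and one obtains exactly $\Gamma_1^{A/\alpha}\sum_{j=1}^\infty\Gamma_j^{-A/\alpha}$ as in \eqref{e:test1.h0}; the infinite sum converges almost surely precisely because $\alpha/A<1$.

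For part (ii), under hard truncation, the strategy is to show that the sum in the numerator of $Z_n(A)$ grows strictly faster than the maximum in the denominator. The key structural observation is that, in the hard truncation regime, $nP(|H_1|>M_n)\to\infty$, so a growing number of observations reach the truncation level, and by the construction \eqref{e:the.model} each such observation contributes $(M_n+R_j)^A$ to the numerator. The plan is first to control the denominator: since each $|X_i|\le M_n+R_i$, and the extra assumption $ER_1^A<\infty$ together with \eqref{assume1}-type control (here the moment condition $ER_1^A<\infty$) keeps the $R_j$ from dominating, one shows $\max_i|X_i|^A$ is of order $M_n^A$ (up to lower-order fluctuations from the largest $R_j$). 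For the numerator, I would restrict to the observations with $\|H_i\|>M_n$, of which there are approximately $nP(|H_1|>M_n)\to\infty$ many by the law of large numbers, each contributing at least $M_n^A$; hence the numerator is at least of order $nP(|H_1|>M_n)\cdot M_n^A$. Dividing, $Z_n(A)$ is bounded below by a quantity of order $nP(|H_1|>M_n)\to\infty$, giving convergence to infinity in probability.

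The main obstacle I anticipate is in part (ii), controlling the denominator $\max_i|X_i|^A$ and, more delicately, verifying that the $R_j$ contributions to the numerator do not accidentally inflate the \emph{maximum} faster than the bulk inflates the \emph{sum}. The moment assumption $ER_1^A<\infty$ is exactly what is needed here: it ensures, via a Markov/Borel--Cantelli estimate on $P(R_j>\epsilon M_n)$ summed over the $n$ observations, that $\max_i R_i=o(M_n)$ (or at worst $O(M_n)$) in probability, so that $\max_i|X_i|^A\asymp M_n^A$ rather than being dominated by a single huge $R_j$. Once this bound on the maximum is secured, the ratio argument is routine, since the number of truncated observations diverges and each contributes order $M_n^A$ to the numerator. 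I would therefore spend most of the care on the probabilistic bound $\max_{1\le i\le n}R_i\,\one(\|H_i\|>M_n)=o(M_n)$ in probability, which is the linchpin of the divergence claim.
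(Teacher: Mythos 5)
Part (i) of your proposal is sound and matches the paper in substance: on the event that no $|H_i|$ exceeds $M_n$, whose probability is at least $1-nP(|H_1|>M_n)\to1$ under soft truncation, $Z_n(A)$ coincides with the corresponding statistic of the untruncated sample, and the limit $\Gamma_1^{A/\alpha}\sum_j\Gamma_j^{-A/\alpha}$ for $\sum_i|H_i|^A/\max_i|H_i|^A$ is the classical sum-to-maximum result for nonnegative regularly varying variables with index $\alpha/A<1$. Just note that the ``classical computation'' you invoke is not a bare continuous-mapping step: the sum functional is not vaguely continuous, and the real content (which the paper writes out) is the $\vep$-truncation of the small points together with a Karamata estimate showing their contribution is uniformly negligible.

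Part (ii) contains a genuine gap, and it is exactly at the step you call the linchpin. The claim that $ER_1^A<\infty$ yields $\max_{1\le i\le n}R_i\one(|H_i|>M_n)=o(M_n)$, or even $O(M_n)$, in probability is false under the stated hypotheses. Hard truncation constrains $M_n$ only from above ($nP(|H_1|>M_n)\to\infty$); it allows $M_n$ to grow arbitrarily slowly. Take $P(R_1>r)\sim r^{-(A+1)}$, so that $ER_1^A<\infty$, and $M_n=\log n$. The number of exceedances $K_n$ concentrates near $nP(|H_1|>M_n)\sim n(\log n)^{-\alpha}\ell(\log n)\to\infty$, while for any fixed $C>0$ one has $K_nP(R_1>CM_n)\sim n(\log n)^{-\alpha}\ell(\log n)\,(C\log n)^{-(A+1)}\to\infty$. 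Since the $R_j$ attached to the exceedances are i.i.d.\ and independent of which indices exceed, $P\bigl(\max_j R_j\one(|H_j|>M_n)\le CM_n\bigr)\le E\bigl[\exp\bigl\{-K_nP(R_1>CM_n)\bigr\}\bigr]\to0$. So with probability tending to one the denominator $\max_i|X_i|^A$ \emph{is} dominated by a single huge $R_j$ and is of strictly larger order than $M_n^A$; the Markov/union (or Borel--Cantelli) bound you propose cannot close, because it would need $nP(R_1>\vep M_n)\to0$, which does not follow from a moment condition when $M_n$ grows slowly.

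The conclusion survives, but by a different comparison, and this is what the paper does: do not try to prove that the maximum is of order $M_n^A$; instead show that the numerator swamps whatever the maximum can be. By Borel--Cantelli, $ER_1^A<\infty$ gives $\max_{j\le n}R_j^A=o(n)$ a.s., hence $\max_i|X_i|^A\le c\bigl(M_n^A+o(n)\bigr)$. Your lower bound on the numerator, of order $nM_n^AP(|H_1|>M_n)$ (from the $\approx nP(|H_1|>M_n)$ exceedances, each contributing at least $M_n^A$ since $R_j\ge0$; the paper instead uses observations with $|H_i|\in[M_n/2,M_n]$, which is equivalent), dominates both terms: $M_n^A\big/\bigl(nM_n^AP(|H_1|>M_n)\bigr)=1\big/\bigl(nP(|H_1|>M_n)\bigr)\to0$ by hard truncation, and $n\big/\bigl(nM_n^AP(|H_1|>M_n)\bigr)=1\big/\bigl(M_n^AP(|H_1|>M_n)\bigr)\to0$ because $M_n^AP(|H_1|>M_n)=M_n^{A-\alpha}\ell(M_n)\to\infty$ --- this is precisely where $A>\alpha$ enters. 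With this replacement for your denominator step, part (ii) becomes correct.
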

\begin{proof} For part (i), we define
$$
b_n = \inf\bigl\{ x>0:\, P(|H_1|^A>x)\leq n^{-1}\bigr\},\,
n=1,2,\ldots\,.
$$
Note that, for  any $x>0$,
$$
n P\bigl( b_n^{-1}|X_1|^A>x\bigr) \sim n P\bigl(
b_n^{-1}|H_1|^A>x\bigr) \to x^{-\alpha}
$$
as $n\to\infty$. It follows from Proposition 3.21 (page 154) in
\cite{resnick:1987} that we have the following weak convergence
of a sequence of point processes on $(0,\infty]$:
\begin{equation} \label{e:point.pr}
N_n := \sum_{j=1}^n \delta_{b_n^{-1}|X_1|^A} \Rightarrow
N := \sum_{j=1}^\infty \delta_{\Gamma_j^{-A/\alpha}}
\end{equation}
as $n\to\infty$. Here $\delta_a$ is a point mass at $a$, and the
weak convergence takes place in the space of Radon point measures
on $(0,\infty]$  endowed with the topology of vague convergence;
see Section 3.4 in \cite{resnick:1987}. We would like to use the
continuous mapping theorem to deduce \eqref{e:test1.h0} from
\eqref{e:point.pr}, but a preliminary truncation step is
necessary.

For $\vep>0$ we define
$$
Z_n(A;\vep):=\frac{\sum_{i=1}^n|X_i|^A\one\bigl( b_n^{-1}|X_i|^A>\vep
  \bigr)}{\max_{1\le i\le n}|X_i|^A}\,.
$$
Notice that $Z_n(A;\vep) = h(N_n)$, where for a Radon point measure
$\eta= \sum_j \delta_{r_j}$ on $(0,\infty]$,
$$
h(\eta) = \frac{\eta\bigl( (\vep,\infty]\bigr)}{\max_j r_j}\,.
$$
It is standard (and easy) to check that $h$ is continuous with
probability 1 at the Poisson random measure $N$ in \eqref{e:point.pr},
so by the continuous mapping theorem,
$$
Z_n(A;\vep) \Rightarrow \Gamma_1^{A/\alpha}
\sum_{j=1}^\infty \Gamma_j^{-A/\alpha}\one\bigl(
\Gamma_j^{-A/\alpha}>\vep\bigr)\,.
$$
Therefore, the convergence  \eqref{e:test1.h0} will follow once we
check that for every $\delta>0$,
\begin{equation} \label{e:check.1.h0}
\lim_{\vep\to 0}\limsup_{n\to\infty} P\bigl( Z_n(A) -
Z_n(A;\vep)>\delta\bigr)= 0\,.
\end{equation}
To this end, notice that, for any $0<\theta<1$ we can select $\tau>0$
so small that $P\bigl( \max_{1\le i\le n}|X_i|^A\leq \tau b_n\bigr)\leq
\theta$ for all $n$ large enough. Then, for all $n$ large enough,
$$
P\bigl( Z_n(A) - Z_n(A;\vep)>\delta\bigr) \leq \theta
+ \delta^{-1} E\left( \tau^{-1} b_n^{-1} \sum_{i=1}^n|X_i|^A\one\bigl(
  b_n^{-1}|X_i|^A\leq \vep   \bigr)\right)
$$
$$
= \theta + \delta^{-1} \tau^{-1} nb_n^{-1} E\Bigl( |X_1|^A\one\bigl(
  b_n^{-1}|X_1|^A\leq \vep   \bigr)\Bigr)
$$
$$
= \theta + \delta^{-1} \tau^{-1} nb_n^{-1} E\Bigl( |H_1|^A\one\bigl(
  b_n^{-1}|H_1|^A\leq \vep   \bigr)\Bigr)
$$
$$
\sim \theta + \delta^{-1} \tau^{-1} nb_n^{-1}\Bigl( (1-\alpha/A)^{-1}
(\vep b_n) P\bigl( |H_1|^A > \vep b_n\bigr)
$$
$$
\sim \theta + \delta^{-1} \tau^{-1} nb_n^{-1} (1-\alpha/A)^{-1}
(\vep b_n) \bigl( \vep^{-\alpha/A}n^{-1}\bigr)
$$
$$
= \theta + \delta^{-1} \tau^{-1} (1-\alpha/A)^{-1} \vep^{1-\alpha/A}.
$$
where the second equality holds because of soft truncation, and
the first asymptotic equivalence follows from the Karamata
theorem. Since $A>\alpha$, we obtain \eqref{e:check.1.h0} by first
letting $\vep\to 0$ and then $\theta\to 0$. This completes the proof
of part (i).

For part (ii), we start with observing that
\begin{equation} \label{e:testh0.num}
\frac{\sum_{i=1}^n|X_i|^A}{nM_n^A P(|H_1|>M_n)}
 \geq \frac{\sum_{i=1}^n|H_i|^A \one\bigl(M_n/2\leq
|H_i|\le  M_n\bigr)}{nM_n^A P(|H_1|>M_n)}
\end{equation}
$$
\geq (M_n/2)^A \frac{\sum_{i=1}^n \one\bigl(M_n/2\leq
|H_i|\le  M_n\bigr)}{nM_n^A P(|H_1|>M_n)} \sim 2^{-A}(2^\alpha-1)
$$
in probability. On the other hand, for some constant $c>0$, by the
assumption $ER_1^A<\infty$,
$$
\max_{1\le i\le n}|X_i|^A \leq c\bigl( M_n^A + \max_{1\leq j\leq
n}R_j^A\bigr) = cM_n^A + o(1)n
$$
a.s. as $n\to\infty$. Since the truncation is hard, and $A>\alpha$, we
see that
\begin{equation} \label{e:testh0.denom}
\frac{\max_{i=1,\ldots,n}|X_i|^A}{nM_n^A P(|H_1|>M_n)}\to 0
\end{equation}
a.s. as $n\to\infty$ as well. The claim of part (ii) follows from
\eqref{e:testh0.num} and \eqref{e:testh0.denom}.
\end{proof}

Based on Proposition \ref{prop:test1}, we suggest the following test
for the problem \eqref{hypo2}.
\begin{equation} \label{e:test.h0.formal}
\text{reject $H_0$ at significance level $p\in (0,1)$ if} \ \
Z_n(A) >c_p(\alpha/A)\,,
\end{equation}
with  $c_p(\theta)$  such that $P(Z(\theta)>c_p(\theta))=p$, where
for $0<\theta<1$,
\begin{equation} \label{e:test.stat.h0}
Z(\theta) =  \Gamma_1^{1/\theta}
\sum_{j=1}^\infty \Gamma_j^{-1/\theta}\,.
\end{equation}

The random variable $Z(\theta)$ does not seem to have one of the
standard distributions, and we are not aware of any previous studies
of the distribution of $Z(\theta)$. The following proposition lists
some of the properties of this distribution.

\begin{prop}\label{prop:Ztheta}
The random variable $Z(\theta)$ is an infinitely divisible random
variable. It has a density with respect to the Lebesgue measure,
and the Laplace transform
\begin{equation} \label{e:laplace.tr}
Ee^{-\gamma Z(\theta)} = \left( 1 + \gamma e^\gamma\int_0^1
e^{-\gamma x}x^{-\theta}\, dx\right)^{-1}\,,
\end{equation}
$\gamma> \gamma_0$, where $\gamma_0<0$ is the number satisfying
$$
1 + \gamma_0 e^{\gamma_0}\int_0^1
e^{-\gamma_0 x}x^{-\theta}\, dx =0\,.
$$
\end{prop}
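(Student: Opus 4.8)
The plan is to analyze the random variable $Z(\theta) = \Gamma_1^{1/\theta}\sum_{j=1}^\infty \Gamma_j^{-1/\theta}$ by recognizing it as a functional of a Poisson process, which will simultaneously yield infinite divisibility and a tractable Laplace transform. First I would observe that the points $\{\Gamma_j^{-1/\theta}\}$ form a Poisson random measure $N$ on $(0,\infty)$ whose mean measure I would compute: since $\Gamma_j$ are the arrival times of a unit-rate Poisson process, the intensity of $\{\Gamma_j^{-1/\theta}\}$ is $\theta^{-1}\nu^{-\theta-1}\,d\nu$ (equivalently, $N((x,\infty)) = x^{-\theta}$ in expectation). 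The key difficulty, however, is that $Z(\theta)$ is not simply $\sum_j \Gamma_j^{-1/\theta}$ (which would not even converge for $\theta<1$ in the relevant regime and would be a stable-type sum); rather it is normalized by the largest point $\Gamma_1^{-1/\theta} = \max_j \Gamma_j^{-1/\theta}$, i.e. $Z(\theta) = (\max_j r_j)^{-1}\sum_j r_j$ where $r_j = \Gamma_j^{-1/\theta}$.

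The main obstacle is exactly this ratio structure, since a ratio of two Poisson functionals is not obviously infinitely divisible. The plan to circumvent this is to condition on the largest point. I would use the standard description of a Poisson process ordered by its largest atom: conditionally on $\Gamma_1 = g$ (equivalently, on the maximum atom $M = g^{-1/\theta}$), the remaining atoms $\{\Gamma_j^{-1/\theta}: j\ge 2\}$ form, after rescaling, a Poisson process on $(0,1)$ whose structure is again explicit. Writing $Z(\theta) = 1 + M^{-1}\sum_{j\ge 2} r_j$, I would compute the conditional Laplace transform $E\bigl(e^{-\gamma Z(\theta)}\mid M = m\bigr)$ using the Poisson exponential formula (the Lévy--Khinchine / Campbell formula), which for a Poisson process with intensity $\lambda(d\nu)$ gives $E\exp\bigl(-\gamma\int \nu\, N(d\nu)\bigr) = \exp\bigl(\int (e^{-\gamma\nu}-1)\lambda(d\nu)\bigr)$. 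After the change of variables aligning the rescaled atoms with the interval $(0,1)$, the inner integral should produce precisely the expression $\gamma\int_0^1 e^{-\gamma x}x^{-\theta}\,dx$ appearing in \eqref{e:laplace.tr}, and integrating out $M$ against its own (explicit) marginal density should collapse the $(1+\cdots)^{-1}$ form.

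For the three asserted properties I would proceed as follows. For the Laplace transform \eqref{e:laplace.tr}, the computation above is the heart of the matter; I expect the normalization by $\Gamma_1^{A/\alpha}$ and the exact form of the Poisson intensity to be what forces the clean closed form, and I would verify convergence of the defining series and the region of validity $\gamma > \gamma_0$ by checking where the bracketed expression stays positive, with $\gamma_0$ the unique negative root as stated. For infinite divisibility, rather than factor the ratio directly I would read it off from the Laplace transform: I would show that $-\log Ee^{-\gamma Z(\theta)} = \log\bigl(1+\gamma e^\gamma\int_0^1 e^{-\gamma x}x^{-\theta}\,dx\bigr)$ has the form of a Lévy--Khinchine exponent, i.e. exhibit it as $\int_0^\infty (1-e^{-\gamma u})\,\Lambda(du)$ for some Lévy measure $\Lambda$ on $(0,\infty)$, for instance by differentiating in $\gamma$ and recognizing a completely monotone structure, which is the standard route to identifying an infinitely divisible law on $[0,\infty)$. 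Finally, existence of a density with respect to Lebesgue measure follows because an infinitely divisible law whose Lévy measure has infinite total mass (which I would confirm holds here) is necessarily absolutely continuous; alternatively one can argue the density directly from smoothness of the characteristic function obtained by analytic continuation of \eqref{e:laplace.tr}. The delicate bookkeeping in the conditioning step — getting the rescaling of the subordinate points exactly right so the integral lands on $(0,1)$ — is where I expect the real work to be.
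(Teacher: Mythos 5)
Your core computation of the Laplace transform is essentially the paper's own proof. The paper writes $Z(\theta)\stackrel{d}{=}1+T^{1/\theta}W_T$, where $T=\Gamma_1$ is standard exponential and $W_\delta=\sum_{j\ge1}(\delta+\Gamma_j)^{-1/\theta}$ is built from an independent Poisson process (this is your ``condition on the largest atom'' step, phrased via the Markov property at the first arrival); the Poisson exponential formula plus the change of variables $x=t^{1/\theta}y$ puts the L\'evy integral on $(0,1)$, and integrating out $T$ together with an integration by parts yields \eqref{e:laplace.tr}, valid exactly when $e^{-\gamma}+\gamma\int_0^1e^{-\gamma x}x^{-\theta}\,dx>0$, i.e.\ $\gamma>\gamma_0$. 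For infinite divisibility your route differs from the paper's: the paper reads it off the representation $Z(\theta)\stackrel{d}{=}1+Y(T)$, where $Y$ is the subordinator with Laplace exponent $\psi(\gamma)=\int_0^1(1-e^{-\gamma x})\theta x^{-(1+\theta)}\,dx$ and $T$ is independent exponential, using that a L\'evy process stopped at an independent infinitely divisible time is infinitely divisible. Your plan to exhibit $-\log Ee^{-\gamma Z(\theta)}$ as a L\'evy--Khinchine exponent can be made to work, but not by naively differentiating the closed form: the workable path is to note $Ee^{-\gamma Z(\theta)}=e^{-\gamma}(1+\psi(\gamma))^{-1}$ and use $\log(1+\psi(\gamma))=\int_0^\infty s^{-1}e^{-s}\bigl(1-e^{-s\psi(\gamma)}\bigr)\,ds=\int_0^\infty(1-e^{-\gamma u})\Lambda(du)$ with $\Lambda(du)=\int_0^\infty s^{-1}e^{-s}P\bigl(Y(s)\in du\bigr)\,ds$ --- which is just the paper's subordination argument in Laplace-transform clothing.

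The genuine gap is your density argument. The claim that an infinitely divisible law with infinite L\'evy measure is necessarily absolutely continuous is false: infinite L\'evy mass only forces the law to be atomless, and there exist continuous-singular infinitely divisible laws with infinite L\'evy measure (see Sato's book, Chapter 27). The correct sufficient condition (Tucker's theorem) requires the \emph{absolutely continuous part} of the L\'evy measure to have infinite mass, which you have not verified. Your fallback --- ``smoothness of the characteristic function implies a density'' --- is also wrong (the characteristic function of a point mass is entire); what gives a density is integrability of the characteristic function, and here that fails for $Z(\theta)$ itself: from the mixture representation, $\bigl|Ee^{iuZ(\theta)}\bigr|$ decays only like a constant times $|u|^{-\theta}$ with $\theta<1$, so direct Fourier inversion is unavailable. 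The repair is the paper's argument: for each fixed $t>0$ the characteristic function of $Y(t)$ decays like $\exp\{-ct|u|^{\theta}\}$, hence is integrable, so $Y(t)$ has a density; a mixture over $t$ of absolutely continuous laws is absolutely continuous, and therefore so is the law of $Z(\theta)\stackrel{d}{=}1+Y(T)$. (Equivalently, this shows the measure $\Lambda$ above is absolutely continuous with infinite mass, so Tucker's theorem applies --- but either way the pointwise-in-$t$ density of $Y(t)$ is the missing ingredient.)
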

\begin{proof}
For $\delta>0$ let
$$
W_\delta = \sum_{j=1}^\infty \bigl( \delta +
\Gamma_j\bigr)^{-1/\theta}\,.
$$
Then $W_\delta$ is an infinitely divisible random variable with the
Laplace transform
$$
Ee^{-\gamma W_\delta} = \exp\left\{ - \int_0^{\delta^{-1/\theta}}
\bigl( 1- e^{-\gamma y}\bigr) \theta y^{-(1+\theta)}\, dy\right\}
$$
for all $\gamma\in\bbr$ because the L\'evy measure of $W_\delta$ has a
compact support; see \cite{rosinski:1990b} and \cite{sato:1999}. Since
$$Z(\theta)\eid1+T^{1/\theta}W_T$$
where $T$ is a standard exponential random variable independent of
$\bigl(\Gamma_j:j\ge1\bigr)$, it follows that
\begin{equation} \label{e:inter.laplace}
Ee^{-\gamma Z(\theta)} = \int_0^\infty e^{-t} e^{-\gamma} Ee^{-\gamma
t^{1/\theta} W_t}\, dt
\end{equation}
$$
= e^{-\gamma} \int_0^\infty e^{-t} \exp\left\{ - t\int_0^1 \bigl( 1-
e^{-\gamma x}\bigr) \theta x^{-(1+\theta)}\, dx\right\}\, dt
$$
$$
= e^{-\gamma} \int_0^\infty\exp\left\{ -t\left[ e^{-\gamma} + \gamma
    \int_0^1 e^{-\gamma x}x^{-\theta}\, dx\right]\right\}\, dt
$$
via integration by parts. Since the exponent under the integral is
positive if and only if $\gamma>\gamma_0$, we obtain
\eqref{e:laplace.tr}. Additionally, it follows from
\eqref{e:inter.laplace} that
\begin{equation} \label{e:mixture}
Z(\theta) \eid 1+Y(T)\,,
\end{equation}
where $\bigl( Y(t),\, t\geq 0\bigr)$ is a subordinator satisfying
\begin{equation}\label{prop:Ztheta:eq1}
Ee^{-\gamma Y(t)} = \exp\left\{ - t\int_0^1 \bigl( 1-
e^{-\gamma x}\bigr) \theta x^{-(1+\theta)}\, dx\right\}, \ \  t\geq
0\,,
\end{equation}
independent of $T$. Since a
L\'evy process stopped at an independent infinitely divisible random
time is, obviously, infinitely divisible, so is
$Z(\theta)$. Furthermore, the characteristic function of $Y(t)$ is
integrable of the real line for every $t>0$, so each $Y(t)$ has a
density, and then the same is true for any mixture of
$(Y(t))$. Therefore, $Z(\theta)$ has a density.
\end{proof}

Even though we know, by Proposition \ref{prop:Ztheta}, that the
random variable $Z(\theta)$ has a density, at present we do not
know ways to compute this density. One possibility to estimate the
critical values $c_p(\alpha/A)$ to perform the test
\eqref{e:test.h0.formal}, is as follows. For values of $\alpha$
not too close to the upper bound $A$ (or, equivalently, for the
values of $\theta$ not too close to $1$), it is possible to
estimate the critical values by the Monte-Carlo method, by
truncating the infinite series at a sufficiently large finite
number of terms. Using $N=10^5$ number of terms in the series and
generating the (truncated) random variable $10^5$ times, we have
estimated the following quantiles, for a range of values $\theta$.

\begin{center}
\begin{tabular}{l|ccc}
\backslashbox{$p$}{$\theta$}&$0.5$&$0.6$&$0.7$\\
\hline
$.05$&$4.3$&$5.8$&$8.2$\\
$.025$&$5.1$&$6.9$&$9.8$\\
$.01$&$6.2$&$8.4$&$12.1$
\end{tabular}
\end{center}

For $\theta$ closer to $1$, the rate of convergence of the truncated
sum $\sum_{j=1}^N\Gamma_j^{-1/\theta}$ as $N\to\infty$
is very slow, and in order to obtain upper bounds on the quantiles of
the random variable $Z(\theta)$ we used Proposition \ref{prop:Ztheta}
as described below. Such upper bounds lead to conservative versions of
the test \eqref{e:test.h0.formal}. We use the exponential Markov
inequality: for $0<r<-\gamma_0$,
$$
P(Z(\theta)\ge z)\le e^{-rz}Ee^{rZ} = e^{-rz} \left( 1 -r
e^{-r}\int_0^1
e^{r x}x^{-\theta}\, dx\right)^{-1}\,,
$$
and estimate the integral from above by
$$
\int_0^1 e^{rx}x^{-\theta}dx \le
e^{r/k}\frac{k^{\theta-1}}{1-\theta}+\frac1k\sum_{j=2}^k
e^{rj/k}\left(\frac{j-1}k\right)^{-\theta}\,,
$$
$k>1$. Using $r=.05$ and $k=10^7$ we computed numbers $\tilde
c_p(\theta)$ satisfying
$$
P\bigl(Z(\theta)\ge\tilde c_p(\theta)\bigr)\le p\,.
$$
These numbers $\tilde c_p(\theta)$  are reported in the following
table.

\begin{center}
\begin{tabular}{l|ccc}
\backslashbox{$p$}{$\theta$}&$0.8$&$0.9$&$0.95$\\
\hline
$.05$&$65.43$&$73.12$&$127.37$\\
$.025$&$79.29$&$86.98$&$141.23$\\
$.01$&$97.62$&$105.31$&$159.56$
\end{tabular}
\end{center}

Since we are only  assuming that the tail exponent
$\alpha$ has a known upper bound as in \eqref{e:upper.b.alpha}, but
the exact value of $\alpha$ may be unknown, a possible way to obtain a
conservative estimate of the critical value $c_p(\alpha/A)$ in
\eqref{e:test.stat.h0} is to choose a number $A_1>A$ and use the
statistic $Z_n(A_1)$ instead of $Z_n(A)$ in
\eqref{e:test.stat.1}. By Proposition \ref{prop:test1},
under the null hypothesis, the test statistic
converges weakly to $Z(\alpha/A_1)$, which is stochastically smaller
than $Z(A/A_1)$, and we  obtain a conservative test by
modifying \eqref{e:test.h0.formal} as follows:
\begin{equation} \label{e:test.h0.mod}
\text{reject $H_0$ at significance level $p\in (0,1)$ if} \ \
Z_n(A_1) >c_p(A/A_1)\,.
\end{equation}

\subsection{Testing the hypothesis of hard truncation}\label{test2}

In this subsection we consider the following problem of testing a null
hypothesis against a simple alternative:

\begin{equation}\label{hypo1}
\left.\begin{array}{rcl}
H_0:&P(|H_1|>M)\gg n^{-1}&\mbox{(hard truncation)}\\
H_{\rm alt}:&P(|H_1|>M)\ll n^{-1}&\mbox{(soft truncation)}
\end{array}\right\}.
\end{equation}

We still assume that an upper bound \eqref{e:upper.b.alpha} on the
tail exponent is known. For a test statistic in this case we
choose a number $\gamma\in(0,1)$ and define
\begin{equation} \label{e:test.stat.2}
Z_n(A;\gamma)=\frac{\left(\sum_{j=1}^{[\gamma
      n]}(-1)^jX_{j}^{\langle A/2\rangle}\right)^2}{\sum_{j=[\gamma
      n]+1}^n|X_{j}|^A}.
\end{equation}
Here $a^{\langle b\rangle}= |a|^b{\rm sign}(a)$ for real $a,b$ is the
signed power. The asymptotic distribution of $Z_n(A;\gamma)$
under the null hypothesis and under the alternative in \eqref{hypo1}
is described in Proposition \ref{prop:test2} below. Recall the
standard notation of $S_\alpha(\sigma,\beta,\mu)$ for (the
distribution of) an
$\alpha$-stable random variable with the scale $\sigma$, skewness
$\beta$ and location $\mu$; see \cite{samorodnitsky:taqqu:1994}. For a
symmetric $\alpha$-stable random variable, $\beta=\mu=0$. For a
positive strictly $\alpha$-stable random variable with $0<\alpha<1$,
one has $\beta=1$ and $\mu=0$. Finally, for $0<\alpha<2$, let
$$
C_\alpha = \begin{cases}
(\Gamma(1-\alpha) \cos (\pi\alpha/2))^{-1}
 &\mbox{if $\alpha\neq 1,$}\\
2/\pi &\mbox{if $\alpha =1,$}
\end{cases}
$$

\begin{prop}\label{prop:test2} (i) \ Assume that $ER_1^{2A}<\infty$. Then
  under the hypothesis $H_0$   of hard truncation,
\begin{equation} \label{e:test2.h0}
Z_n(A;\gamma)\Rightarrow C_1(\gamma) \chi_1^2\,,
\end{equation}
where $C_1(\gamma)=2\gamma/(1-\gamma)$,
and $\chi_1^2$ is the standard chi-square random variable with one
degree of freedom.

(ii) \ Under the hypothesis $H_{\rm alt}$ of soft truncation,
\begin{equation} \label{e:test2.h1}
Z_n(A;\gamma)\Rightarrow C_2(A;\gamma) \frac{S_1^2}{S_2}\,,
\end{equation}
where
$$
C_2(A;\gamma)=\left(
\frac{\gamma}{1-\gamma}\frac{C_{\alpha/A}}{C_{2\alpha/A}}
\right)^{A/\alpha}\,,
$$
and $S_1$ and $S_2$ are independent
random variables, such that $S_1$ is a symmetric $2\alpha/A$-stable
random variable with unit scale, and $S_2$ is a positive strictly
$\alpha/A$-stable random variable with unit scale.
\end{prop}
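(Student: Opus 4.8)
The plan is to analyze the numerator and denominator of $Z_n(A;\gamma)$ separately in each regime, exploiting the fact that the alternating signs $(-1)^j$ in the numerator create a sum of independent signed terms to which a central limit theorem (in the hard regime) or a stable limit theorem (in the soft regime) applies.

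For part (i) under hard truncation, I would first observe that because the truncation is hard, each $X_j^{\langle A/2\rangle}$ has a finite variance (using $ER_1^{2A}<\infty$ together with Lemma \ref{clt.l5}-type computations), so the numerator $\sum_{j=1}^{[\gamma n]}(-1)^jX_j^{\langle A/2\rangle}$ is a sum of i.i.d. mean-zero-after-sign-alternation terms. The alternating signs guarantee that the mean of each summand vanishes, so no centering is needed. Applying the classical Lindeberg CLT exactly as in the proof of Theorem \ref{t2}, I would show that this sum, divided by the appropriate scaling $\bigl([\gamma n]\,\Var(X_1^{\langle A/2\rangle})\bigr)^{1/2}$, converges to a standard normal, hence its square converges to $\chi_1^2$ after scaling. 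For the denominator $\sum_{j=[\gamma n]+1}^n|X_j|^A$, I would use a law of large numbers: since $E|X_1|^A$ is finite and asymptotically of order $M_n^AP(|H_1|>M_n)$ by Lemma \ref{clt.l5}, the sum concentrates around $(1-\gamma)n\,E|X_1|^A$. Taking the ratio, the factors of $M_n^AP(|H_1|>M_n)$ and the integrals $\int_S|s|^A\tilde\Gamma(ds)$ cancel, and the combinatorial constants $\gamma/(1-\gamma)$ survive, yielding the constant $C_1(\gamma)=2\gamma/(1-\gamma)$ and the $\chi_1^2$ limit.

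For part (ii) under soft truncation, the random variables behave like untruncated heavy-tailed variables, so I would instead invoke stable limit theory. The numerator is a sum of the i.i.d. symmetric summands $(-1)^jX_j^{\langle A/2\rangle}$, whose common distribution has a regularly varying tail with exponent $2\alpha/A$ (since raising $|H_1|$ to the power $A/2$ divides the tail exponent by $A/2$); the alternating sign makes the distribution symmetric, so by the stable limit theorem the numerator, scaled by $b([\gamma n])$ with $b$ the appropriate quantile function, converges to a symmetric $2\alpha/A$-stable law $S_1$. The denominator is a sum of i.i.d. nonnegative terms $|X_j|^A$ with tail exponent $\alpha/A<1$, so after scaling it converges to a positive strictly $\alpha/A$-stable subordinator value $S_2$. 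The independence of $S_1$ and $S_2$ follows because the two sums are built from disjoint index blocks $\{1,\ldots,[\gamma n]\}$ and $\{[\gamma n]+1,\ldots,n\}$, hence are independent for each $n$, and the joint weak limit factors. Collecting the deterministic scaling sequences and tracking the normalizing constants $C_{\alpha/A}$ and $C_{2\alpha/A}$ from the stable limit theorems produces the constant $C_2(A;\gamma)$ and the limit $S_1^2/S_2$.

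The main obstacle I anticipate is the careful bookkeeping of the normalizing sequences so that the scaling in the numerator and the denominator combine to give a \emph{nondegenerate} ratio with the stated constants. In part (ii) the numerator scales like $b([\gamma n])^2$ and the denominator like $b((1-\gamma)n)^A$ (with the two $b$'s being the respective quantile functions for the $A/2$- and $A$-powers), and one must verify that the ratio of these scalings tends to a finite positive constant; this is where the precise form of $C_2(A;\gamma)$, involving the ratio $C_{\alpha/A}/C_{2\alpha/A}$ raised to the power $A/\alpha$, emerges from matching the tail constants in the two stable convergences. A secondary technical point, in part (i), is verifying that $ER_1^{2A}<\infty$ suffices to control the contribution of the $R_j$ to the variance of the numerator and to guarantee the Lindeberg condition holds despite the truncation bound $M_n$ growing; this should follow, as in Theorem \ref{t2}, from the hard-truncation fact that the scaling dominates $M_n$.
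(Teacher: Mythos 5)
Your overall strategy coincides with the paper's: analyze the numerator and the denominator of \eqref{e:test.stat.2} separately, using a Lindeberg CLT plus a law of large numbers in the hard regime, and two stable limit theorems plus independence of the disjoint index blocks in the soft regime; the alternating signs are there precisely so that no centering sequence is required. Your part (ii) is in essence the paper's argument: the key point, which you correctly identify as the main bookkeeping obstacle, is that the squared numerator and the denominator scale by the \emph{same} regularly varying sequence (because $|x|^A=(|x|^{A/2})^2$), so the ratio of scalings tends to $(\gamma/(1-\gamma))^{A/\alpha}$ times a ratio of stable tail constants, and independence of $S_1$ and $S_2$ follows from the disjointness of the blocks. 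One small inaccuracy in part (i): it is not true that ``the mean of each summand vanishes'' --- each summand $(-1)^jX_j^{\langle A/2\rangle}$ has mean $\pm E\bigl(X_1^{\langle A/2\rangle}\bigr)\neq 0$. What is true is that the means cancel pairwise, so the mean of the whole alternating sum is at most one term's mean in absolute value, and one must check (via Karamata) that $\bigl|E\bigl(X_1^{\langle A/2\rangle}\bigr)\bigr|\ll\bigl(nM_n^AP(|H_1|>M_n)\bigr)^{1/2}$; the paper handles the same point by writing the alternating sum as a difference of two independent block sums whose centerings cancel.

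The more serious issue is in part (i): you assert, without computing, that the constants ``survive'' to give $C_1(\gamma)=2\gamma/(1-\gamma)$, but your own outline does not deliver this. Carrying it out: $\Var\bigl(X_1^{\langle A/2\rangle}\bigr)\sim E|X_1|^A\sim\bigl(\tfrac{\alpha}{A-\alpha}+1\bigr)M_n^AP(|H_1|>M_n)=\tfrac{A}{A-\alpha}M_n^AP(|H_1|>M_n)$ (Karamata term plus truncated-mass term), so the squared numerator is asymptotically $\tfrac{A\gamma}{A-\alpha}\,nM_n^AP(|H_1|>M_n)\,\chi_1^2$, while the denominator is asymptotically $\tfrac{A(1-\gamma)}{A-\alpha}\,nM_n^AP(|H_1|>M_n)$, giving the ratio $\tfrac{\gamma}{1-\gamma}\chi_1^2$ --- off by a factor of $2$ from the claimed limit. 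This is exactly the delicate spot where the two computations diverge: the paper produces the extra $2$ by invoking Theorem \ref{t2} with $\alpha$ replaced by $2\alpha/A$ and quoting the resulting asymptotic variance as $2A/(A-\alpha)$, whereas the covariance formula \eqref{clt.e3} evaluated at the exponent $2\alpha/A$ gives $2/(2-2\alpha/A)=A/(A-\alpha)$. Until you reconcile this factor of $2$ --- i.e., either locate the step your variance accounting misses, or conclude that the constant in \eqref{e:test2.h0} should be $\gamma/(1-\gamma)$ --- your proof of part (i) is incomplete at precisely the quantitative claim it is supposed to establish.
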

\begin{proof}
The claim of part (i) will follow from the following two statements.
\begin{equation} \label{e:test2.num}
\frac{1}{(nM_n^A P(|H_1|>M_n))^{1/2}}
\sum_{j=1}^{[\gamma
      n]}(-1)^jX_{j}^{\langle A/2\rangle} \Rightarrow \left(
      \frac{2A\gamma}{A-\alpha}\right)^{1/2} N(0,1)\,,
\end{equation}
and
\begin{equation} \label{e:test2.denom}
\frac{1}{nM_n^A P(|H_1|>M_n)} \sum_{j=[\gamma
      n]+1}^n|X_{j}|^A \to \frac{A(1-\gamma)}{A-\alpha}
\end{equation}
in probability. We prove \eqref{e:test2.denom} first, and it is enough
to show that
\begin{equation} \label{e:test2.denom1}
\frac{1}{nM_n^A P(|H_1|>M_n)}E\left( \sum_{j=[\gamma
      n]+1}^n|X_{j}|^A\right)\to \frac{A(1-\gamma)}{A-\alpha}
\end{equation}
and
\begin{equation} \label{e:test2.denom2}
\frac{1}{\bigl( nM_n^A P(|H_1|>M_n)\bigr)^2}
{\rm Var}\left( \sum_{j=[\gamma
      n]+1}^n|X_{j}|^A\right)\to 0\,.
\end{equation}
Note that by the Karamata theorem,
$$
E\left( \sum_{j=[\gamma n]+1}^n|X_{j}|^A\right)
\sim (1-\gamma)n\, E\bigl( |X_{1}|^A\bigr)
$$
$$
= (1-\gamma)n \Bigl[ E\bigl( |H_1|^A \one(|H_1|\leq M_n)\bigr)
+ E(M_n+R_1)^A P(|H_1|> M_n)\Bigr]
$$
$$
\sim (1-\gamma)n \left[ \frac{\alpha}{A-\alpha}M_n^A P(|H_1|>M_n)
+ M_n^A P(|H_1|>M_n)\right]
$$
$$
= \bigl( nM_n^A P(|H_1|>M_n)\bigr)
\frac{A(1-\gamma)}{A-\alpha} \,,
$$
proving \eqref{e:test2.denom1}. A similar calculation gives us
$$
{\rm Var}\left( \sum_{j=[\gamma n]+1}^n|X_{j}|^A\right)
\sim (1-\gamma)n\, {\rm Var}\bigl( |X_{1}|^A\bigr)
$$
$$
\leq n\, E\bigl( |X_{1}|^{2A}\bigr)
\sim \bigl( nM_n^{2A} P(|H_1|>M_n)\bigr)\frac{2A}{2A-\alpha}\,,
$$
and \eqref{e:test2.denom2} follows because the truncation is
hard. Therefore, we have established \eqref{e:test2.denom}.

In order to prove \eqref{e:test2.num}, note that the triangular
array
$$
\tilde X_{nj}:=H_j^{\langle A/2\rangle}\one\bigl(|H_j|^{A/2}\le
M_n^{A/2}\bigr)
+\frac{H_j}{|H_j|}(M_n^{A/2}+R_j^{A/2})\one\bigl(|H_j|^{A/2}>M_n^{A/2}
\bigr)\,,
$$
$j=1,\ldots, n, \, n=1,2,\ldots$, satisfies the assumptions of Theorem
\ref{t2} (with $\alpha$ replaced by $2\alpha/A$), and, therefore,
$$
\frac{1}{ \bigl( nM_n^A P(|H_1|>M_n)\bigr)^{1/2}}\left( \sum_{j=1}^n
\tilde X_{nj} -
E \Bigl(  \sum_{j=1}^n \tilde X_{nj}\Bigr)\right)
\Rightarrow \left(\frac{2A}{A-\alpha}\right)^{1/2} N(0,1)\,.
$$
The random variables $\bigl( X_j^{\langle A/2\rangle}\bigr)$ form a
somewhat different triangular array, namely
$$
X_{nj}^{\langle A/2\rangle}=H_j^{\langle
A/2\rangle}\one\bigl(|H_j|^{A/2}\le M_n^{A/2}\bigr)
+\frac{H_j}{|H_j|}(M_n+R_j)^{A/2}\one\bigl(|H_j|^{A/2}>M_n^{A/2}
\bigr)\,,
$$
$j=1,\ldots, n, \, n=1,2,\ldots$, but an inspection of the proof
of Theorem \ref{t2} shows that the argument applies equally well
to the latter triangular array, so that
$$
\frac{1}{ \bigl( nM_n^A P(|H_1|>M_n)\bigr)^{1/2}}\left( \sum_{j=1}^n
X_{nj}^{\langle
A/2\rangle} -
E \Bigl(  \sum_{j=1}^n X_{nj}^{\langle A/2\rangle}\Bigr)\right)
$$
$$
\Rightarrow \left(\frac{2A}{A-\alpha}\right)^{1/2} N(0,1)\,.
$$
In particular, (extending the length of the rows of the triangular
array) we see that
$$
\frac{1}{ \left( nM_n^A P(|H_1|>M_n)\right)^{1/2}}\left( \sum_{j=1}^n
X_{nj}^{\langle A/2\rangle} - \sum_{j=n+1}^{2n} X_{nj}^{\langle
A/2\rangle}\right)
$$
$$
\Rightarrow \left(\frac{4A}{A-\alpha}\right)^{1/2} N(0,1)\,.
$$
Replacing $n$ with $[n\gamma/2]$, we obtain \eqref{e:test2.num} and,
hence, finish the proof of  part (i).

For part (ii), we define
$$
b_n = \inf\bigl\{ x>0:\, P(|H_1|^{A/2}>x)\leq n^{-1}\bigr\},\,
n=1,2,\ldots \,.
$$
Then for some centering sequence $(c_n)$ we have
$$
b_n^{-1}\left(\sum_{j=1}^nH_j^{\langle A/2\rangle}-c_n\right)
\Rightarrow Y
$$
with $Y$ having a $S_{2\alpha/A}(\sigma,\beta,\mu)$ distribution with
$\sigma^{2\alpha/A} = (C_{2\alpha/A})^{-1}$ and some $\beta,\mu$; see
\cite{feller:1971}. Because of the soft truncation, the triangular
array $\bigl( X_{nj}^{\langle A/2\rangle}\bigr)$ satisfies Theorem
\ref{t1}, and so
$$
b_n^{-1}\left(\sum_{j=1}^nX_{nj}^{\langle A/2\rangle}-c_n\right)
\Rightarrow Y
$$
with the same $Y$. Extending the rows of the triangular array gives us
$$
b_n^{-1}\left(\sum_{j=1}^nX_{nj}^{\langle
A/2\rangle}-\sum_{j=n+1}^{2n}X_{nj}^{\langle A/2\rangle} \right)
\Rightarrow \left( \frac{2}{C_{2\alpha/A}}\right)^{A/(2\alpha)}S_1\,,
$$
where $S_1$ is a symmetric $2\alpha/A$-stable
random variable with unit scale. Replacing $n$ with $[n\gamma/2]$
we obtain
\begin{equation} \label{e:t2:soft.1}
\sum_{j=1}^{[\gamma
      n]}(-1)^jX_{j}^{\langle A/2\rangle} \Rightarrow
\left( \frac{\gamma}{C_{2\alpha/A}}\right)^{A/(2\alpha)}S_1\,.
\end{equation}

Next, we also have
$$
b_n^{-2}\sum_{j=1}^n|H_{j}|^A \Rightarrow
\left(\frac{1}{C_{\alpha/A}}\right)^{A/\alpha}S_2\,,
$$
where $S_2$ is a positive strictly
$\alpha/A$-stable random variable with unit scale; see once again
\cite{feller:1971}. As before, because of the soft truncation, Theorem
\ref{t1} applies, and we obtain
$$
b_n^{-2}\sum_{j=1}^n|X_{nj}|^A \Rightarrow
\left(\frac{1}{C_{\alpha/A}}\right)^{A/\alpha}S_2\,.
$$
Replacing $n$ with $(1-\gamma)n$, shows that
\begin{equation} \label{e:t2:soft.2}
b_n^{-2}\sum_{j=[\gamma n]+1}^n|X_{j}|^A \Rightarrow
\left(\frac{1-\gamma}{C_{\alpha/A}}\right)^{A/\alpha}S_2\,.
\end{equation}
Since the numerator and the denominator of the statistic
$Z_n(A;\gamma)$ in \eqref{e:test.stat.2} are independent, the claim of
part (ii) of the proposition follows from \eqref{e:t2:soft.1} and
\eqref{e:t2:soft.2}.
\end{proof}

Interestingly, the asymptotic distribution of the test statistic
$Z_n(A;\gamma)$, under the null hypothesis, does not depend on the
choice of the parameter $A$ (as long as it an  upper bound on the tail
exponent
$\alpha$). Furthermore, under the null hypothesis this asymptotic
distribution of the test statistic is
light-tailed (e.g. some  exponential moments are finite).  On the
other hand, the asymptotic distribution of the test statistic
under the alternative is, clearly, heavy tailed, as even the second
moment is infinite. Therefore, a reasonable test will reject the null
hypothesis in favor of the alternative if the test statistic is too
large. That is, we suggest the following test
for the problem \eqref{hypo1}.
\begin{equation} \label{e:test.h1.formal}
\text{reject $H_0$ at significance level $p\in (0,1)$ if} \ \
Z_n(A;\gamma) >\frac{2\gamma}{1-\gamma}c_p\,,
\end{equation}
with $c_p$ such that $P(\chi_1^2>c_p)=p$.

\subsection{Testing a stronger version of the hypothesis of hard
truncation}\label{test3}

The test statistics $Z_n(A;\gamma)$ we used in the previous subsection
for the problem \eqref{hypo1} has a nondegenerate asymptotic
distribution under both the null hypothesis and the alternative. This
may restrict the power of the resulting test. In order to obtain a
more powerful test we strengthen the null hypothesis. Specifically,
in this subsection we consider the following problem of testing a null
hypothesis against a simple alternative:
\begin{equation} \label{hypo3}
\left. \begin{array}{rc}
H_0:&n^{1-\epsilon}P(|H_1|>M)\gg1\\
H_{\rm alt}:&nP(|H_1|>M)\ll1
\end{array} \right\},
\end{equation}
where $\epsilon$ is a fixed number in $(0,1)$.

For this problem one can use the same test statistic $Z_n(A)$ defined
in \eqref{e:test.stat.1} as we used for the problem \eqref{hypo2} of
testing the hypothesis of soft truncation. Proposition
\ref{prop:test1} tells us that this test statistic diverges in
probability to infinity under the hypothesis of hard truncation.
The strengthened hypothesis of hard truncation in \eqref{hypo3} allows
us to quantify how fast this divergence takes place. This, in turn,
can be used to build a test. The asymptotic distribution of $Z_n(A)$
under the hypothesis of soft truncation is described in Proposition
\ref{prop:test1}. The next result provides an asymptotic
distributional lower bound on the test statistic under the null
hypothesis in the problem \eqref{hypo3}. As in the previous
subsections, we assume that an upper bound \eqref{e:upper.b.alpha} on the
tail exponent is known.

\begin{prop}\label{prop:test3} \ Assume that $ER_1^{2A}<\infty$. Then
  under the strengthened hypothesis $H_0$  of hard truncation,
\begin{equation} \label{e:test3.h0}
\liminf_{n\to\infty} P\Bigl( n^{-\epsilon/2}Z_n(A)> x\Bigr) \geq
e^{-x^{2}}
\end{equation}
for every $x>0$.
\end{prop}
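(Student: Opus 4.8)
The plan is to bound the test statistic $Z_n(A)$ from below by extracting the contribution of the truncated-tail observations, whose empirical behavior we can control under the strengthened null hypothesis $n^{1-\epsilon}P(|H_1|>M)\gg 1$. Since the proof of Proposition~\ref{prop:test1}(ii) already established the order of magnitude $Z_n(A)\asymp nM_n^A P(|H_1|>M_n)$ in the hard truncation regime, the key is to identify the random fluctuation around this order of magnitude and show that, after the normalization $n^{-\epsilon/2}$, it has a nondegenerate distributional lower bound with the stated Gaussian-type tail $e^{-x^2}$.

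First I would write $Z_n(A) = \bigl(\sum_{i=1}^n|X_i|^A\bigr)/\max_{1\le i\le n}|X_i|^A$ and handle numerator and denominator separately. For the denominator, the argument in \eqref{e:testh0.denom} combined with the moment assumption $ER_1^{2A}<\infty$ gives $\max_{1\le i\le n}|X_i|^A \sim c M_n^A$ almost surely for an explicit constant, so the denominator is asymptotically deterministic of order $M_n^A$. The real content is therefore in the numerator $\sum_{i=1}^n|X_i|^A$. I would center it: writing $S_n^{(A)}:=\sum_{i=1}^n|X_i|^A$, its mean is of order $nM_n^A P(|H_1|>M_n)$ by the Karamata computation already carried out in \eqref{e:test2.denom1}, while its variance is of order $nM_n^{2A}P(|H_1|>M_n)$ (again using $ER_1^{2A}<\infty$, as in \eqref{e:test2.denom2}). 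The normalized and centered numerator then satisfies a central limit theorem; the relevant quantity is the ratio of the standard deviation to the mean, which is of order $\bigl(nP(|H_1|>M_n)\bigr)^{-1/2}$.

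The strengthened hypothesis enters precisely here. Under $n^{1-\epsilon}P(|H_1|>M_n)\gg 1$ we have $nP(|H_1|>M_n)\gg n^{\epsilon}$, so the relative fluctuations of the numerator are $\ll n^{-\epsilon/2}$ and hence negligible after dividing by $n^{\epsilon/2}$. The strategy is then to show that $n^{-\epsilon/2}Z_n(A)$ is, up to lower-order terms, a deterministic multiple of $n^{-\epsilon/2}\,nP(|H_1|>M_n)$, which under the null tends to $+\infty$; but to obtain the nontrivial lower bound $e^{-x^2}$ rather than a trivial statement, I would keep track of the one source of genuine randomness that survives, namely the random number of exceedances $N_n^{\rm ex}:=\sum_{i=1}^n\one(|H_i|>M_n)$. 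Its mean is $\lambda_n:=nP(|H_1|>M_n)$, and by a normal approximation $\bigl(N_n^{\rm ex}-\lambda_n\bigr)/\lambda_n^{1/2}\Rightarrow N(0,1)$. Because the numerator is dominated by the exceedance contribution $\sim M_n^A N_n^{\rm ex}$ and the denominator is $\sim cM_n^A$, the statistic satisfies $Z_n(A)\gtrsim c^{-1}N_n^{\rm ex}$, so $n^{-\epsilon/2}Z_n(A)$ inherits the Gaussian fluctuations of $n^{-\epsilon/2}N_n^{\rm ex}$. Matching the exponent $e^{-x^2}$ against the Gaussian tail $P(N(0,1)>t)\approx e^{-t^2/2}$ fixes the precise constants, and the $\liminf$ form of the inequality allows me to absorb all the lower-order error terms on the correct side.

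The main obstacle I anticipate is extracting a clean \emph{distributional lower bound} rather than a weak limit: since $\lambda_n=nP(|H_1|>M_n)$ is only assumed to grow faster than $n^{\epsilon}$ (and could grow much faster), the statistic $n^{-\epsilon/2}Z_n(A)$ does not converge in distribution, and I must argue the inequality \eqref{e:test3.h0} uniformly. The way around this is to fix $x>0$, use the lower bound on the numerator in terms of $N_n^{\rm ex}$ together with the almost-sure control of the denominator, and reduce to showing $\liminf_n P\bigl(n^{-\epsilon/2}c^{-1}N_n^{\rm ex}>x\bigr)\ge e^{-x^2}$; since $N_n^{\rm ex}$ is a sum of i.i.d. indicators with mean $\lambda_n\gg n^{\epsilon}$, a Gaussian (or even a Chernoff/large-deviation) lower bound on $P(N_n^{\rm ex}>c\,x\,n^{\epsilon/2})$ gives a probability tending to $1$ when $\lambda_n/n^{\epsilon/2}\to\infty$ faster than $x\,n^{\epsilon/2}$, and the worst case $\lambda_n\approx n^{\epsilon}$ is exactly where the Gaussian tail $e^{-x^2}$ emerges as the limiting lower bound. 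Getting the constant $c$ and the factor of $2$ in the exponent to line up correctly will require care, but it is a routine matching of the normal-approximation variance against the claimed tail.
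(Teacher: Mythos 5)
Your proposal rests on two claims that do not survive scrutiny, and they are linked. First, the assertion that the denominator satisfies $\max_{1\le i\le n}|X_i|^A \sim cM_n^A$ almost surely is false in general, and it is not what \eqref{e:testh0.denom} says: that display (proved for Proposition \ref{prop:test1}(ii)) only gives the upper bound $\max_{1\le i\le n}|X_i|^A \ll nM_n^AP(|H_1|>M_n)$. On the event that at least one exceedance occurs, the maximum equals $\bigl(M_n+\max\{R_j:|H_j|>M_n\}\bigr)^A$, and nothing in the hypotheses forces the largest of these $R_j$'s to be $o(M_n)$: hard truncation allows $M_n$ to grow arbitrarily slowly (say $M_n=\log n$), while $ER_1^{2A}<\infty$ allows $P(R_1>s)$ to decay essentially like $s^{-2A}$, in which case the maximum of the roughly $nP(|H_1|>M_n)$ relevant $R_j$'s is a power of $n$ and swamps $M_n^A$. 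So the denominator is genuinely random and can be much larger than $M_n^A$, and your key lower bound $Z_n(A)\ge c^{-1}N_n^{\rm ex}$ does not follow. This is exactly the difficulty the paper's proof is built to avoid: it lower-bounds $Z_n(A)$ by the self-normalized ratio $\sum_j R_j^A\one(|H_j|>M_n)\,/\,\max_j R_j^A\one(|H_j|>M_n)$, using that replacing $R_j$ by $M_n+R_j$ can only increase a sum-to-max ratio termwise, so that $M_n$ disappears from the problem entirely.

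Second, your picture of where $e^{-x^2}$ comes from is wrong, so the final ``matching of constants'' cannot be carried out. Under the strengthened null, $\lambda_n=nP(|H_1|>M_n)\gg n^{\epsilon}$, hence $n^{-\epsilon/2}N_n^{\rm ex}$ is of order $n^{-\epsilon/2}\lambda_n\gg n^{\epsilon/2}\to\infty$ and $P\bigl(c^{-1}n^{-\epsilon/2}N_n^{\rm ex}>x\bigr)\to 1$ for every fixed $x$: the fluctuations of $N_n^{\rm ex}$, of order $\lambda_n^{1/2}\ll\lambda_n$, are invisible at scale $n^{\epsilon/2}$, and your ``worst case'' $\lambda_n\approx n^{\epsilon}$ is excluded by the hypothesis $\lambda_n\gg n^{\epsilon}$ (and would still give probability tending to $1$). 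No Gaussian tail ever emerges from the exceedance count. In the paper, the factor $e^{-x^2}$ has a different origin: conditionally on $N_n$, after a law-of-large-numbers step giving $\sum_{j\le N_n}R_j^A\ge N_n ER_1^A/2$, one needs the event $\max_{j\le N_n}R_j^A < n^{-\epsilon/2}N_n\,(ER_1^A/2)\,x^{-1}$, whose probability is $(1-p_n)^{N_n}$ with $N_np_n$ of order $x^2$ by Chebyshev applied to $R_1^{2A}$ on $\{N_n\ge n^{\epsilon}\}$; this is a binomial/Poisson-type limit, not a CLT. Incidentally, your route can be repaired without any Gaussian input: since $ER_1^{2A}<\infty$ gives $\max_{j\le m}R_j^A=o_P(m^{1/2})$, splitting on whether the largest exceedance $R_j^A$ exceeds $M_n^A$ or not yields $Z_n(A)\ge 2^{-A}\min\bigl(N_n^{\rm ex},\,(N_n^{\rm ex})^{1/2}M_n^A/o_P(1)\bigr)\gg n^{\epsilon/2}$, so the probability in \eqref{e:test3.h0} in fact tends to $1$; but that argument must confront the random denominator head-on, which your write-up never does.
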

\begin{proof}
In the notation of the triangular array \eqref{e:the.model}, consider
the binomial random variable
$N_n = \sum_{j=1}^n \one\bigl(|H_j|>M_n\bigr)$. The strengthened
hypothesis of hard truncation implies that $P(N_n\geq n^{\epsilon})\to
1$ as $n\to\infty$. Notice that, on an event of probability increasing
to 1,
$$
Z_n(A) \geq \frac{ \sum_{j=1}^n
(M_n+R_j)^A\one\bigl(|H_j|>M_n\bigr)}{\max_{j=1,\ldots,n}
(M_n+R_j)^A\one\bigl(|H_j|>M_n\bigr)}
$$
$$
\geq \frac{ \sum_{j=1}^n
R_j^A\one\bigl(|H_j|>M_n\bigr)}{\max_{j=1,\ldots,n}
R_j^A\one\bigl(|H_j|>M_n\bigr)}\,.
$$
Therefore, for $x>0$, using the assumption $ER_1^{2A}<\infty$, we have
$$
\liminf_{n\to\infty} P\Bigl( n^{-\epsilon/2}Z_n(A)> x\Bigr) \geq
\liminf_{n\to\infty}P\Bigl(\max_{j=1,\ldots,
N_n}R_j^A < n^{-\epsilon/2}N_n\frac{ER_1^A}{2}x^{-1}\Bigr)
$$
$$
\geq \liminf_{n\to\infty} E\left[ \left(
1-\frac{x^2}{N_n}\right)^{N_n}\one\Bigl( N_n\geq
n_\epsilon\Bigr) \right]
\to e^{-x^2}\,,
$$
as required.
\end{proof}

Proposition \ref{prop:test3} tells us that under the hypothesis $H_0$,
$n^{-\epsilon/2}Z_n(A)$ is, asymptotically, stochastically larger than
the square root of the standard exponential random variable
(independently of the parameter $A$). Therefore, we suggest the
following test for the problem \eqref{hypo3}.
\begin{equation} \label{e:test.h3.formal}
\text{reject $H_0$ at significance level $p\in (0,1)$ if} \ \
Z_n(A) \leq \bigl| \log(1-p)\bigr|^{1/2}n^{\epsilon/2}\,.
\end{equation}

\section{Think Times and Object Sizes data:  soft truncation or hard 
  truncation? }  \label{sec:data.analysis}

In this section we applied the statistical methods of Section
\ref{sec:regime.test} to two data sets. One data set contains
``think times'', or delays (in microseconds) between successive
request/response exchanges between hosts using a TCP connection.
The second data set contains the sizes (in bytes) of objects
(files, HTTP responses, email messages, etc.) transferred on TCP
connections. Both data sets were acquired by monitoring between
1:30 PM and 2:30 PM on July 24, 2006, the communication links
connecting the site of a large commercial enterprize to the
Internet. Both data sets exhibit visual evidence of heavy tails,
and the Hill estimator confirms that (see below).  Our goal is to
check if the data sets show statistical evidence of soft or hard 
truncation of heavy tails. As we will see, in a number of cases we are
not able to reject either the hypothesis of soft truncation or that of
hard truncation. This is an indication that there is much room for
improving the statistical techniques of testing for the type of the
truncation of the tails that provides the best approximation to
data. We will comment more on that issue at the end of the section. 

\subsection{Think Times} This data sets contains $2.1\times10^7$
observations which are plotted on Figure \ref{data:ThinkTime}.
\begin{figure}
\begin{center}
\includegraphics[width=8cm]{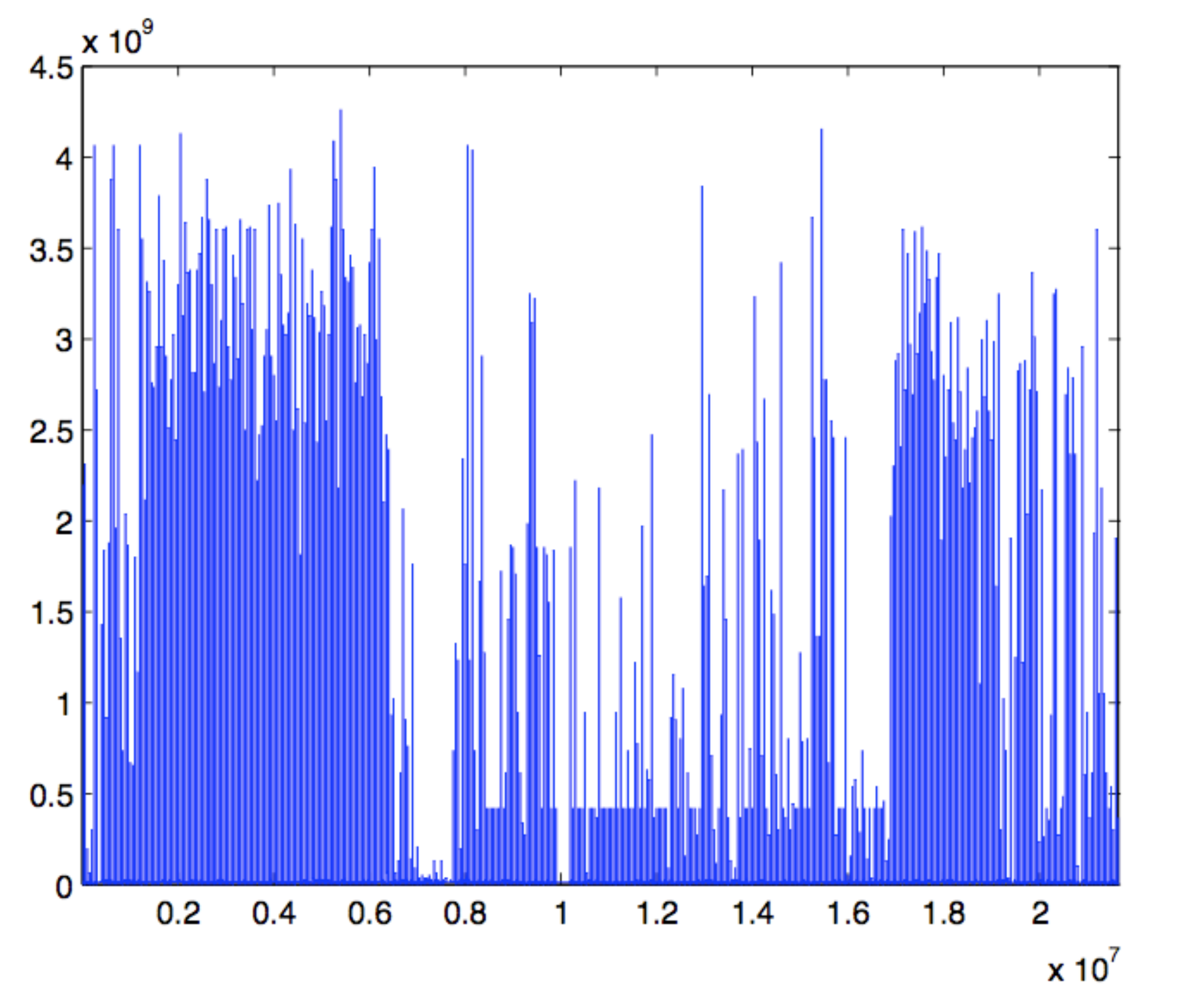}
\caption{Think Times - the entire data set}\label{data:ThinkTime}
\end{center}
\end{figure}

Clearly, the nature of this data set changes over time, and the nature
of truncation of heavy tails may potentially change as well.
In order to study this effect we have, somewhat arbitrarily, broken
the data set into four pieces, with corresponding ranges
$\bigl[ 0.11\times 10^7, 0.64\times10^7\bigr]$;
$\bigl[ 0.8\times 10^7, 1.6\times10^7\bigr]$;
$\bigl[ 1.7\times 10^7, 1.9\times10^7\bigr]$ and
$\bigl[ 1.95\times 10^7, 2.1\times10^7\bigr]$.
The individual pieces are plotted on Figure \ref{data:ThinkTimes_pieces}
\begin{figure}
\begin{center}
\includegraphics[width=12cm]{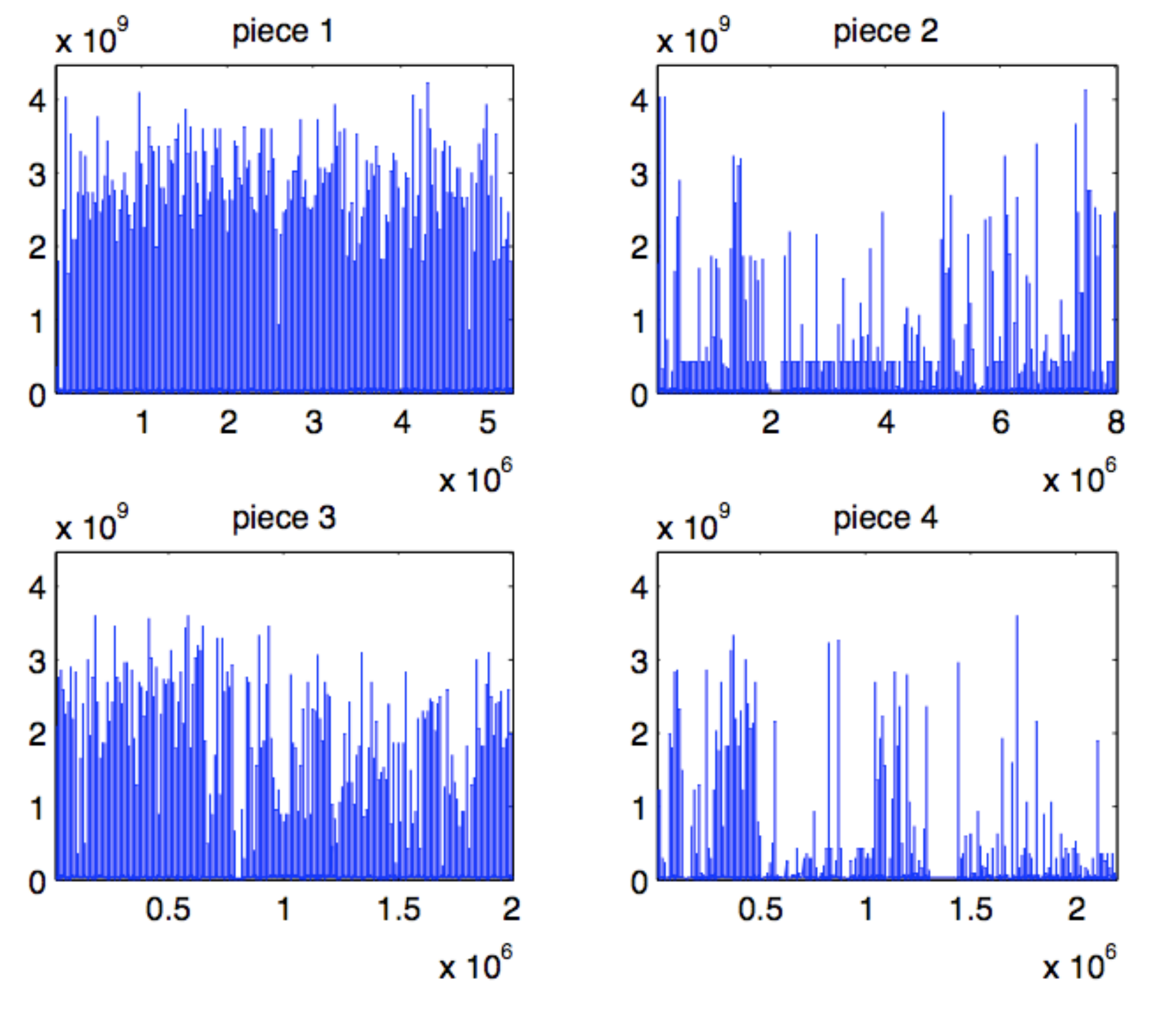}
\caption{Think Times - the different pieces}\label{data:ThinkTimes_pieces}
\end{center}
\end{figure}

The structure of the 4 individual pieces appears to be more stable
than that of the entire data sets, and we proceed to analyze each
piece separately. To do that, we first ran the Hill estimator with
random $k$ given in \eqref{e:k.random} on the first half of each
of the 4 pieces. The estimation was conducted using $\beta,
\gamma=0.3,0.4, 0.5, 0.6, 0.7$ and conservative upper bounds for
$\alpha$ were obtained; these are presented in the following
table.

\begin{center}
\begin{tabular}{c|c}
piece & $A$\\
\hline
1 & $3.02$\\
2 & $2.30$\\
3 & $0.85$\\
4 & $2.24$
\end{tabular}
\end{center}
We then proceeded to use the second halves of each piece of the Think
Times data set to test for soft and hard truncations.

\subsubsection*{Testing the hypothesis of soft truncation}
The test statistic $Z_n(A_1)$ of Section
\ref{test1}  was computed for
various values of $A_1$ larger than $A$. The results are reported in
the following table.
\begin{center}
\begin{tabular}{c|cccc}
$A/A_1$&piece 1&piece 2&piece 3&piece 4\\
\hline
$0.5$&$31.43$&$5.81$&$154.05$&$3.57$\\
$0.6$&$51.59$&$7.99$&$205.37$&$4.72$\\
$0.7$&$77.39$&$10.74$&$271.27$&$6.11$\\
$0.8$&$108.08$&$14.20$&$361.74$&$7.81$\\
$0.9$&$142.78$&$18.57$&$491.31$&$9.91$\\
$0.95$&$161.38$&$21.16$&$576.73$&$11.13$
\end{tabular}
\end{center}
Comparing the resulting values of the test statistic  with the
corresponding quantiles (or their upper bounds) of $Z(A/A_1)$, it is
clear that the null hypothesis
of soft truncation can be rejected for pieces 1 and 3 at the
levels less that $0.01$, while for piece
2 the null hypothesis of soft truncation can be rejected at the level
$0.025$, but not at the level $0.01$. For piece 4 we cannot reject the
null hypothesis of soft truncation even at the level $0.05$. These
findings are rather consistent with visual analysis of the pieces of
the data set: piece 4 exhibits extremes that do not appear to be
seriously truncated (one has to keep in mind that the visual analysis
can easily be misleading). 

\subsubsection*{Testing the hypothesis of hard truncation}
The test statistic $Z_n(A;\gamma)$ of Section
\ref{test2}  was computed for various values of $\gamma$.
The resulting $p$-values are reported in the following table.
\begin{center}
\begin{tabular}{c|cccc}
$\gamma$&piece 1&piece 2&piece 3&piece 4\\
\hline
$0.1$&$0.85$&$0.72$&$0.88$&$0.33$\\
$0.2$&$0.83$&$0.98$&$0.38$&$0.57$\\
$0.3$&$0.97$&$0.99$&$0.79$&$0.68$\\
$0.4$&$0.94$&$0.68$&$0.39$&$0.43$\\
$0.5$&$0.83$&$0.63$&$0.94$&$0.47$\\
$0.6$&$0.97$&$0.89$&$0.83$&$0.27$\\
$0.7$&$0.91$&$0.88$&$0.87$&$0.40$\\
$0.8$&$0.64$&$0.85$&$0.80$&$0.33$\\
$0.9$&$0.70$&$0.37$&$0.85$&$0.40$
\end{tabular}
\end{center}
The hypothesis of hard truncation cannot be rejected for
any of the four pieces. As expected, the $p$-values for piece 4 are
lower than those for the other 3 pieces, but they are still rather
high.

\subsubsection*{Testing  a stronger version of the hypothesis of hard
  truncation}

The test statistics $Z_n(A)$ of Section \ref{test3} was computed
and the corresponding $p$-values calculated for various values of
$\epsilon$. These are listed in the following table.
\begin{center}
\begin{tabular}{c|cccc}
$\epsilon$&piece 1&piece 2&piece 3&piece 4\\
\hline
$0.1$&$1.00$&$1.00$&$1.00$&$1.00$\\
$0.2$&$1.00$&$1.00$&$1.00$&$1.00$\\
$0.3$&$1.00$&$1.00$&$1.00$&$0.91$\\
$0.4$&$1.00$&$0.73$&$1.00$&$0.45$
\end{tabular}
\end{center}

It is clear that even the stronger version of the  hypothesis of hard
truncation cannot be rejected.

\subsection{Object Sizes} This data set contains $2.2\times10^7$
observations. It is plotted in Figure
\ref{data:ObjectSize}. It does not appear that the nature of the
observations changes with time, so we applied our statistical tests to
the entire data set. After running the Hill estimator with random $k$
and parameters $\beta$ and $\gamma$ as above, on the first half of the
data set, we obtained a conservative upper bound on the value of the
tail exponent $\alpha$; this turned out to be $A=1.69$. We used the
second half of the Object Sizes data set to test for soft and hard
truncations.
\begin{figure}
\begin{center}
\includegraphics[width=8cm]{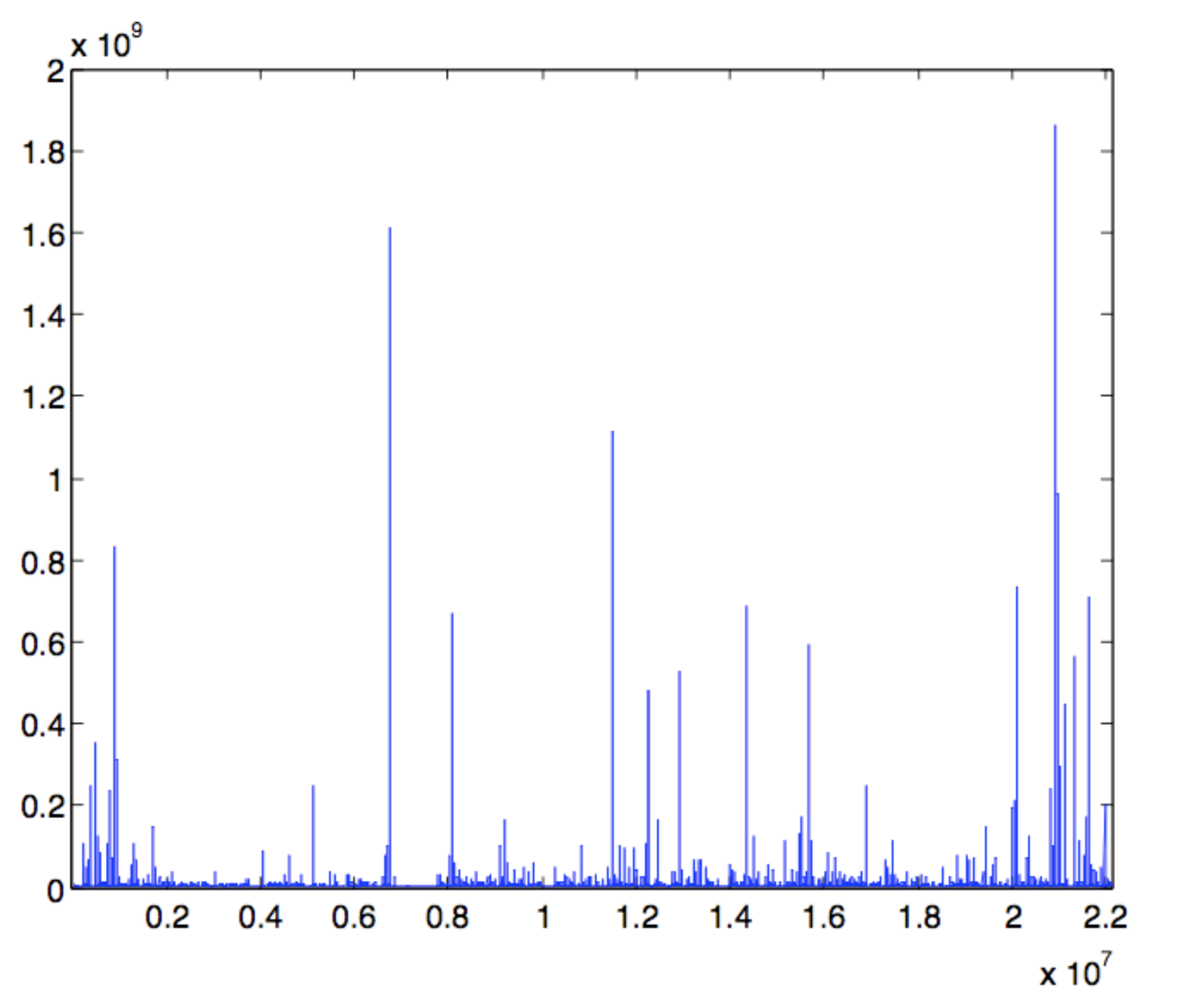}
\caption{Data on Object Sizes}\label{data:ObjectSize}
\end{center}
\end{figure}

\subsubsection*{Testing the hypothesis of soft truncation} We
evaluated the test statistic $Z_n(A_1)$ of Section
\ref{test1}  for a range of values of $A_1$ larger than $A$. The
results are reported in the following table.
\begin{center}
\begin{tabular}{c|c}
$A/A_1$&$Z_n(A_1)$\\
\hline
$0.5$&$1.75$\\
$0.6$&$2.32$\\
$0.7$&$3.09$\\
$0.8$&$4.10$\\
$0.9$&$5.42$\\
$0.95$&$6.23$
\end{tabular}
\end{center}
Comparing these with the corresponding quantiles (or their upper
bounds) of $Z(A/A_1)$, we see that the hypothesis of soft truncation
cannot be rejected.

\subsubsection*{Testing the hypothesis of hard truncation} We
evaluated the test statistic $Z_n(A;\gamma)$ of Section
\ref{test2}  for various values of $\gamma$, and the obtained $p$-values
are reported in the following table.
\begin{center}
\begin{tabular}{c|c}
$\gamma$&$p$-value\\
\hline
$0.1$&$0.50$\\
$0.2$&$0.36$\\
$0.3$&$0.73$\\
$0.4$&$0.77$\\
$0.5$&$0.95$\\
$0.6$&$0.94$\\
$0.7$&$0.94$\\
$0.8$&$0.97$\\
$0.9$&$0.72$
\end{tabular}
\end{center}
The null hypothesis of hard truncation cannot be rejected.

\subsubsection*{Testing  a stronger version of the hypothesis of hard
  truncation}

We calculated the test statistics $Z_n(A)$ of Section \ref{test3} for
various values of $\epsilon$, and the
$p$-values are given in the following table.
\begin{center}
\begin{tabular}{c|c}
$\epsilon$&$p$-value\\
\hline
$0.1$&$1.00$\\
$0.2$&$0.86$\\
$0.3$&$0.33$\\
$0.4$&$0.08$
\end{tabular}
\end{center}
The strengthened hypothesis of hard   truncation becomes suspicious
for $\epsilon=0.4$, but overall our statistical tests do not produce
clear evidence of the level of truncation for the Object Sizes data
set.

\bigskip

{\bf Summary} \ The results of this section, especially the lack of
conclusion in a number of cases, point to some of the
directions  for future statistical work with truncated heavy tails.
\begin{itemize}
\item More powerful tests are needed, together with estimates of their 
actual power. 
\item What sample sizes are necessary for the asymptotic
significance levels to be applicable?
\item The tests of Section \ref{sec:regime.test} depend on tuning
parameters $A$ and $\gamma$. At present we do not have a clear picture
of the role of these parameters, or how to select them in the 
``optimal'' way. 
\item What is the best way to select the parameters $\beta$ and
$\gamma$ in the sample-based number of the upper order statistics in
\eqref{e:k.random} to use in Hill estimation? Can one combine
estimation of the tail index with the testing for a truncation regime
in a single procedure?
\end{itemize}

\section{Acknowledgment} The authors wish to thank Dr. F. Donelson
Smith of the Network Research Laboratory in the Computer Science
Department at the University of North Carolina at Chapel Hill for kindly
providing the data sets analyzed in Section
\ref{sec:data.analysis}.


\bibliography{F:/work_arijit/res_ht/bibfile}
\bibliographystyle{F:/work_arijit/res_ht/apalike}


\end{document}